\newtheorem{theorem}{Theorem}[section]
\newtheorem{lemma}[theorem]{Lemma}
\newtheorem{corollary}[theorem]{Corollary}
\theoremstyle{definition}
\newtheorem{definition}[theorem]{Definition}
\newtheorem{example}[theorem]{Example}
\theoremstyle{remark}
\newtheorem{remark}[theorem]{Remark}
\numberwithin{equation}{section}
 \newcommand{\res}{\operatorname{res}}
\begin{document}

 \title[Minkowski Measurability and Exact Tube Formulas for $p$-Adic Strings] {Minkowski Measurability and Exact  Fractal Tube Formulas for $p$-Adic Self-Similar  Strings}

\author{Michel L. Lapidus}
\address{Department of Mathematics, University of California, Riverside, CA 92521-0135}
\email{lapidus@math.ucr.edu}
\thanks{The research of the first author (MLL) was partially supported by the US National Science Foundation under the grant DMS-0707524 and DMS-1107750, as well as by the Institut des Hautes Etudes Scientifiques (IHES) where the first author was a visiting professor in the Spring of 2012 while this paper was completed.}
\author {L\~u' H\`ung}
\address{Department of Mathematics, Hawai`i Pacific University, Honolulu, HI 96813-2785}
\email{hlu@hpu.edu}
\thanks{The research of the second author (LH) was partially supported by the Trustees' Scholarly Endeavor Program at Hawai`i Pacific University.}
\author {Machiel van Frankenhuijsen}
\address{Department of Mathematics, Utah Valley University, Orem, UT 84058-5999}
\email{ vanframa@uvu.edu}
\thanks{The research of the third author (MvF) was partially supported by the Georg-August-Universit\"at G\"ottingen and the College of Science and Health of Utah Valley University.}

\subjclass[2000]{Primary 11M41, 26E30, 28A12, 28A80, 32P05, 37P20; Secondary 11M06, 11K41, 30G06, 46S10, 47S10, 52A38, 81Q65.}

\date{\today}

\keywords{Fractal geometry, $p$-adic analysis,   $p$-adic fractal string,  $p$-adic self-similar string, lattice string, strongly lattice string, geometric zeta function, complex dimensions, Minkowski dimension,  Minkowski measurability, average Minkowski content, fractal tubes formulas, Cantor and Fibonacci strings. }

\begin{abstract}
The theory of $p$-adic fractal strings and their complex dimensions was developed by the first two authors in \cite{LapLu1, LapLu2, LapLu4}, particularly in the self-similar case, in parallel with its archimedean (or real) counterpart developed by the first and third author in \cite{L-vF2}. Using the fractal tube formula obtained by the authors for $p$-adic fractal strings in \cite{LapLu3},
we present here an exact  volume formula for the tubular neighborhood of a $p$-adic self-similar fractal  string $\mathcal{L}_p$, expressed in terms of the underlying complex dimensions.  
The periodic structure of the complex dimensions allows one to obtain a very concrete form for the resulting fractal tube formula. 
 Moreover, we derive and use a truncated version of this fractal tube formula  in order to  show that 
$\mathcal L_p$ is not Minkowski measurable and  obtain  an explicit expression for its average Minkowski content. 
The general theory is illustrated by two simple examples, the 3-adic Cantor string and the 2-adic Fibonacci strings, which are nonarchimedean analogs (introduced in \cite{LapLu1, LapLu2}) of the real Cantor and Fibonacci strings studied in \cite{L-vF2}. 
\end{abstract}

\maketitle

\tableofcontents

\begin{quote}
{\em
Nature is an infinite sphere of which the center is everywhere and the circumference nowhere.}
\hspace{\stretch{1}} Blaise Pascal 
\end{quote}

\section{Introduction} 

In this paper, we present and use   the explicit tube formulas obtained in \cite{LapLu3}, for general $p$-adic fractal strings, in order to derive exact fractal tube formulas for $p$-adic self-similar fractal strings. The general results are illustrated in the case of suitable nonarchimedean analogs of the Cantor and the Fibonacci strings. 
Some particular attention is devoted to the 3-adic (or nonarchimedean) Cantor string (introduced and studied in \cite{LapLu1}, an appropriate counterpart of the archimedean Cantor string, whose `metric boundary' is the  3-adic Cantor set \cite{LapLu1}), a suitable $p$-adic analog of the classic ternary Cantor set. We also  derive  an explicit expression for the average Minkowski content of a $p$-adic self-similar string and the `boundary' of the associated nonarchimedean self-similar set.

We note that $p$-adic (or nonarchimedean) analysis has been used in various areas of mathematics (such as functional analysis and operator theory, representation theory, number theory and arithmetic geometry), as well as (sometimes more speculatively) of mathematical and theoretical physics (such as quantum mechanics, relativity theory, quantum field theory, statistical and condensed matter physics, string theory and cosmology); see, e.g., \cite{Drag, Dykkv, Ulam,  Khrennikov, RTV, VVZ, Vol} and the relevant references therein. 
In particular, ultrametric structures have been shown to be very useful tools to study spin glasses in condensed matter physics; see \cite{RTV} for a comprehensive survey on this and related topics. We also point out the more recent review article \cite{Dykkv} which discusses a variety of potential applications of $p$-adic analysis in mathematical physics and biology. 
 Furthermore, several physicists and mathematical physicists have suggested that the small scale structure of spacetime may be fractal; see, e.g., \cite{GibHaw, HawIs, Lap2, Not, WheFo}.
 In addition, it has been suggested (in \cite{Vol} for example) that seemingly abstract objects such as nonarchimedean fields (including the field of $p$-adic numbers) can be helpful in order to understand the geometry of spacetime at sub-Planckian scales.

Finally, we note that $p$-adic fractal strings (and their possible quantized analogs) may be helpful to obtain an appropriate adelic counterpart of ordinary (real) fractal strings, along with their quantization (called fractal membranes), as introduced in \cite{Lap2}.

\section[$p$-Adic Numbers]{$p$-Adic Numbers}
\label{S:p-adic numbers}

Given a fixed prime number $p$,
 any nonzero rational number $x$ can be written as $x=p^v\cdot a/b$,
 for integers $a$ and $b$ and a unique exponent $v\in \mathbb Z$ such that $p$ does not divide $a$ or $b$.
The {\em $p$-adic absolute value\/} is the function $|\cdot|_p\colon\mathbb Q \rightarrow [0,\infty)$ given by $|x|_p=p^{-v}$ and $|0|_p=0$.
It satisfies the {\em strong triangle inequality\/}:
 for every $x,y\in \mathbb Q$,
$$
|x+y|_p\leq \max\{|x|_p, |y|_p\}.
$$
Relative to the $p$-adic absolute value,
 $\mathbb Q$ does not satisfy the archimedean property because for each $x\in \mathbb Q$, $|nx|_p$ will never exceed $|x|_p$ for any $n\in \mathbb N$. The completion of $\mathbb Q$ with respect to  $|\cdot|_p$ is the field of $p$-adic numbers $\mathbb Q_p$.
More concretely,
every $z\in \mathbb Q_p$ has a unique representation
$$
z=a_{v}p^{v} + \cdots + a_0 + a_1p + a_2 p^2 + \cdots,
$$
for some $v\in \mathbb Z$ and  $a_j \in \{0, 1, \dots, p-1\}$ for all $j\geq v$ and $a_v\neq0$.
An important subspace of $\mathbb Q_p$ is the unit ball,
 $\mathbb Z_p=\{x\in \mathbb Q_p\colon |x|_p\leq 1\}$,
 which can also be represented as follows:
\[
\mathbb Z_p=\left\{ a_0 + a_1p + a_2 p^2 + \cdots\colon a_j \in \{0, 1, \dots, p-1\} \text{ for all } j\geq 0 \right\}.
\]
Using this $p$-adic expansion,
 one sees that
\begin{equation}
\mathbb Z_p=\bigcup_{a=0}^{p-1} (a+p\mathbb Z_p),
\label{decomposition}
\end{equation}
where $a+p\mathbb Z_p=\{y\in \mathbb Q_p\colon |y-a|_p\le 1/p\}$.
Note that $\mathbb Z_p$ is compact and thus complete.
Also,
 $\mathbb Q_p$ is a locally compact group,
 and hence admits a unique translation invariant Haar measure $\mu_H,$
normalized so that $\mu_H(\mathbb Z_p)=1$.
In particular,
 $ \mu_H(a+p^n\mathbb Z_p)=p^{-n}$ for every $n\in \mathbb Z$. For general references on $p$-adic analysis, we point out, e.g.,
\cite{Kob,Rob,Sch, Ser}. 
 
 Here and thereafter, we use the following notation: $\mathbb N=\{0, 1,2, \ldots\}$, $\mathbb N^*=\{1, 2, 3, \ldots\}$ and $\mathbb Z=\{0, \pm 1, \pm 2, \ldots\}$.

\section{$p$-Adic Fractal Strings}\label{pfs}
 Let  $ \Omega$ be a bounded open subset of $\mathbb{Q}_p$.  
 Then it can be decomposed into a countable union of disjoint open balls\footnote
{We shall often call a $p$-adic ball an \emph{interval}.  By `ball' here, we mean a metrically closed and hence, topologically open (and closed) ball.}
 with radius $p^{-n_j}$ centered at $a_j\in \mathbb Q_p$, 
 \[ a_j + p^{n_j} \mathbb{Z}_p=B(a_j, p^{-n_j})=\{x\in \mathbb{Q}_p ~|~ |x- a_j|_p \le p^{-n_j}\},\]
  where $n_j \in \mathbb{Z}$ and $j\in \mathbb N^{*}$. 
  There may be many different such decompositions since each ball can always be decomposed into smaller disjoint balls \cite{Kob}; see Equation (\ref{decomposition}).
 However, there is  a canonical decomposition of $\Omega$ into disjoint balls with respect to a suitable equivalence relation, as we now explain.
  
  \begin{definition}\label{relation}
 Let $U$ be an open subset of $\mathbb Q_p$. Given $x,y \in U,$ we write that $x\sim y$ if and only if there is a ball $B\subseteq U$ such that $x, y \in B$.
 \end{definition}
 
 It is easy to check that $\sim$ is an equivalence relation on $U$ (see \cite{LapLu3}), due to the fact that either two balls are disjoint or one is contained in the other. Moreover, there are at most countably many equivalence classes since $\mathbb Q$ is dense in $\mathbb Q_p$. 

  \begin{remark} \label{convex component}(Convex components)
  The equivalence classes of $\sim$ can be thought of as the `convex components' of $U$.  They are an appropriate substitute in the present nonarchimedean context for the notion of connected components, which is not useful in $\mathbb Q_p$ since $\mathbb Z_p$ (and hence, every interval) is totally disconnected.  Note that given any $x\in U,$ the equivalence class (i.e., the \emph{convex component}) of $x$ is the largest  ball containing $x$ (or equivalently, centered at $x$) and contained in $U$. 
    \end{remark}
  
   \begin{definition}\label{p-adic string}
 A \emph{$p$-adic} (or \emph{nonarchimedean}) fractal string
 $ \mathcal{L}_p$ is a bounded open subset $\Omega$ of $\mathbb{Q}_p$. 
 \end{definition}
 Thus it can be written, relative to the above equivalence relation, canonically as a disjoint union of  intervals or balls: 
 \[ \mathcal{L}_p=\bigcup_{j=1}^{\infty} (a_j + p^{n_j} \mathbb{Z}_p)=\bigcup_{j=1}^{\infty} B(a_j, p^{-n_j}). \]
 Here, $B(a_j, p^{-n_j})$ is the largest ball centered at $a_j$ and contained in $\Omega$.
We may assume that the lengths (i.e., Haar measure) of the intervals 
$a_j + p^{n_j} \mathbb{Z}_p$ are nonincreasing, by reindexing if necessary.  That is, 
\begin{equation}\label{sequence of lengths}
p^{-n_1}\geq p^{-n_2} \geq p^{-n_3} \geq \cdots >0.
\end{equation}

\begin{remark}
Ordinary archimedean (or real) fractal strings were introduced in \cite{LapPo, LapPo3} (see also \cite{Lap, Lap3}) and the theory of complex dimensions of those strings was developed in \cite{L-vF2} (and its predecessors).
\end{remark}

\begin{definition}\label{zetaLp}
The\emph{ geometric zeta function} of a $p$-adic fractal string $\mathcal{L}_p$ is defined as
\begin{equation} \label{zeta}
\zeta_{\mathcal{L}_p} (s) = \sum_{j=1}^{\infty} (\mu_H (a_j + p^{n_j} \mathbb{Z}_p))^s
= \sum_{j=1}^{\infty} p^{-n_js} 
\end{equation}
for $\Re(s)$ sufficiently large.
 \end{definition}

\begin{remark}
The geometric zeta function $\zeta_{\mathcal{L}_p}$ is well defined since the decomposition of 
${\mathcal{L}_p}$ into the disjoint intervals $a_j + p^{n_j}\mathbb Z_p$ is unique. Indeed, these intervals are the equivalence classes of which the open set $\Omega$ (defining $\mathcal L_p$) is composed. In other words, they are the $p$-adic  ``convex components'' (rather than the connected components) of $\Omega$. Note that in the real (or archimedean) case, there is no difference between the convex or connected components of $\Omega$, and hence the above construction would lead to the same sequence of lengths as in \cite[\S1.2]{L-vF2}.  
\end{remark}

\begin{figure}[h]\label{screen window}
\psfrag{dots}{$\dots$}
\psfrag{S}{$S$}
\psfrag{W}{$W$}
\psfrag{D}{$D$}
\psfrag{0}{$0$}
\psfrag{1}{$1$}
\raisebox{-1cm}
{\psfig{figure=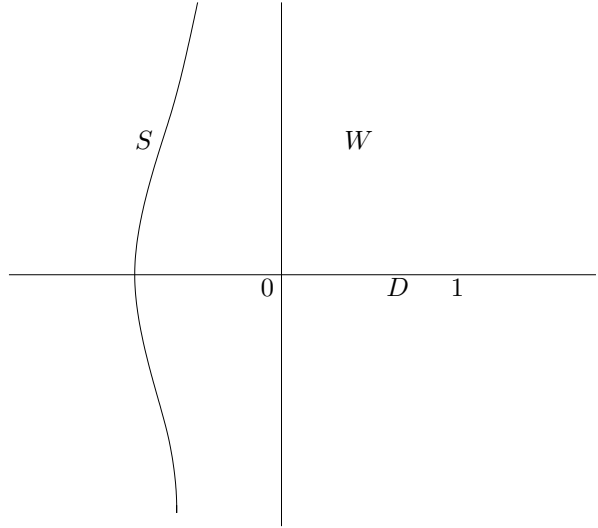, height=7cm}}
\caption{The screen $S$ and the window $W$.}
\end{figure}
  
As in \cite[\S5.3]{L-vF2},  
the \emph{screen} $S$ is the graph (with the vertical and horizontal axes interchanged)
 of a real-valued, bounded and Lipschitz continuous function $S(t)$:
\[
S=\{S(t) + it : t\in \mathbb R\}.
\]
The \emph{window} $W$ is the part of the complex plane to the right of the screen $S$
(see Figure 1): 
\[
W=\{s\in \mathbb C : \Re(s)\geq S(\Im (s))\}.
\]
Let 
\[
\inf S=\inf_{t\in \mathbb R} S(t) \quad \mbox{and} \quad \sup S=\sup_{t\in \mathbb R}S(t), 
\]
and assume that $\sup S \leq \sigma,$ where $\sigma=\sigma_{\mathcal L_p}$ is the abscissa of convergence of $\mathcal L_p$ (to be precisely defined in (\ref{sigma}) below).

  \begin{definition}\label{dvcd} 
  If $\zeta_{\mathcal{L}_p}$ has a meromorphic continuation to an open connected neighborhood of $W\subseteq \mathbb C$, then 
\begin{equation}\label{vcd}
 \mathcal D_{\mathcal L_p}(W)=\{\omega \in W : \omega \mbox{ is a pole of} ~ \zeta_{\mathcal{L}_p}\} 
 \end{equation}
is called the set of \emph{visible complex dimensions} of $\mathcal L_p$.  
If no ambiguity may arise or  if $W=\mathbb C$,  we simply write $\mathcal D_{\mathcal L_p}=\mathcal{D}_{\mathcal{L}_p}(W)$ and call it the set of \emph{complex dimensions} of $\mathcal L_p$. 

Moreover, the \emph{abscissa of convergence} of the Dirichlet series initially defining $\zeta_{\mathcal L_p}$ in Equation (\ref{zeta}) is denoted by
$\sigma=\sigma_{\mathcal L_p}$.
Recall that it is defined by
\begin{equation}\label{sigma}
\sigma_{\mathcal L_p}=\inf\left\{\alpha \in \mathbb R :  \sum_{j=1}^{\infty}p^{-n_j\alpha} <\infty\right\}.
\end{equation}
\end{definition}

\begin{remark}\label{3.7}
In particular, if $\zeta_{\mathcal L_p}$ is entire, which occurs only in the trivial case when 
$\mathcal L_p$ is given by a finite union of intervals, then $\sigma_{\mathcal L_p}=-\infty.$
Otherwise, $\sigma_{\mathcal L_p}\geq 0$ since $\mathcal L_p$ is composed of infinitely many intervals, and hence $\zeta_{\mathcal L_p}(0)=\infty$.  Moreover,  $\sigma_{\mathcal L_p}<\infty$ since $\sigma_{\mathcal L_p}\leq D_M\leq 1,$ where $D_M$ is the Minkowski dimension of $\mathcal L_p$.
(The fact that $D_M\leq 1$ follows since the Haar measure of $\Omega$ is finite and coincides with $\zeta_{\mathcal L_p}(1)$.)
Indeed, as is shown in \cite{LapLu3}, we actually have $\sigma_{\mathcal L_p}=D_M$ for any nontrivial $p$-adic fractal  string.  
 This is the case, for example, for the 3-adic Cantor string introduced in \cite{LapLu1}, for which $\sigma_{\mathcal L_p}=D_M=\log_32$; see Example \ref{cantor string} below.

Observe that since $ \mathcal D_{\mathcal L_p}(W)$ is defined as a subset of the poles of a meromorphic function, it is at most countable. 

Finally, we note that it is well known that $\zeta_{\mathcal L_p}$ is holomorphic for $\Re(s)>\sigma_{\mathcal L_p};$ see, e.g., \cite{Ser}. Hence, 
\[
\mathcal D_{\mathcal L_p} \subset \{s\in \mathbb C: \Re(s)\leq \sigma_{\mathcal L_p}\}.
\]
\end{remark}
  
\begin{example}\label{cantor string}
The 3-adic Cantor string is given by 
\begin{equation}\label{CS_3}
\mathcal{CS}_3=(1+3\mathbb{Z}_3) \cup  (3+9\mathbb{Z}_3) \cup  (5+9\mathbb{Z}_3) \cup \cdots.
\end{equation}
 By definition, the geometric zeta function of $\mathcal{CS}_3$ is given by
  \begin{eqnarray*}
\zeta_{\mathcal{CS}_3}(s)&=&(\mu_H(1+3\mathbb{Z}_3))^s + (\mu_H(3+9\mathbb{Z}))^s + (\mu_H(5+9\mathbb{Z}_3))^s + \cdots\\
&=&\sum_{v=1}^{\infty}\frac{2^{v-1}}{ 3^{vs}} 
=\frac{3^{-s}}{1-2\cdot 3^{-s}} \quad \mbox{for} \quad \Re(s)>\log_32.
\end{eqnarray*}
Hence, by analytic continuation, the meromorphic extension of $\zeta_{\mathcal{CS}_3}$ to the entire complex plane  
$\mathbb C$ exists and  is given by
\begin{equation}\label{cantor zeta}
\zeta_{\mathcal{CS}_3}(s)=\frac{3^{-s}}{1-2\cdot 3^{-s}}, \quad  \mbox{for} ~s\in \mathbb C,
\end{equation}
with poles at 
\[\omega=\frac{\log2}{\log3} +i n \frac{ 2 \pi}{\log3}, \quad n \in \mathbb Z.\] 
 Therefore, the set of complex dimensions  of $\mathcal {CS}_3$ is given by
\begin{equation}\label{cd cantor} 
\mathcal{D}_{\mathcal {CS}_3}=\{   D+i n \mathbf{p}:n \in \mathbb{Z} \},
\end{equation}
where $D=\log_32$ is the dimension of $\mathcal {CS}_3$ and $\mathbf{p}=2\pi / \log3$ is its oscillatory period.  
Moreover, the residue of $\zeta_{\mathcal{CS}_3}(s)$ at $s=D+i n \mathbf{p}$ is given by
\begin{equation}\label{cantor residue}
\res(\zeta_{\mathcal{CS}_3} ; D+i n \mathbf{p})=\frac{1}{2\log3}
\end{equation}
independently of $n\in \mathbb Z$.
Finally, note that $\zeta_{\mathcal{CS}_3}$ is a rational function of $z:=3^{-s}$, i.e.,
\begin{equation*}
\zeta_{\mathcal{CS}_3}(s)=\frac{z}{1-2z}. 
\end{equation*}
\end{example}

The geometric zeta function $\zeta_{\mathcal {CS}_3}$ in Equation  ~\eqref{cantor zeta} is bounded in the left half-plane $\{s\in \mathbb C : \Re (s)\leq0\}$.
In general,
the geometric zeta function of a real or $p$-adic self-similar fractal string is always {\em strongly languid,\/}  i.e., 
\begin{itemize}
\item There exist constants $A,C>0$ such that for all $t\in \mathbb R$ and $m\gg0$, 
\[
 |\zeta_{\mathcal{L}_p}(-m+it)|\leq C A^{|t|}.
\]
\end{itemize}
See \cite[\S5.3]{L-vF2} or \cite{LapLu3} for the general definition of ``languid''.  
  
\section{Volume of  Inner Tubes}\label{inner tube}
 In this section, based on a part of \cite{LapLu3}, we provide a suitable analog in the $p$-adic case of the `boundary' of a fractal string and of the associated inner tubes 
 (inner $\varepsilon$-neighborhoods). Moreover, we give the $p$-adic counterpart of the expression that yields the volume of the inner tubes (see Theorem \ref{thin}). This result serves as a starting point in \cite{LapLu3} for proving the corresponding explicit tube formula. 

\begin{definition}\label{volume definition}
Given a point $a\in \mathbb Q_p$ and a positive real number $r>0$,  
let $B=B(a, r) =\{x\in \mathbb{Q}_p : |x-a|_p \le r \}$
 be a \emph{metrically closed} ball in $\mathbb{Q}_p,$ as above.\footnote
 {Recall that it follows from the ultrametricity of $|\cdot|_p$ that $B$ is topologically both closed and open (i.e., clopen) in $\mathbb Q_p$.}
 We  call 
$S=S(a, r)=\{x\in \mathbb{Q}_p : |x-a|_p = r \}$ the \emph{sphere} of $B$.\footnote
{In our sense, $S$ also coincides with the `metric boundary' of $B$, as given in this definition.}

Let $\mathcal{L}_p= \bigcup_{j=1}^{\infty} B(a_j, r_j)$ be a $p$-adic fractal string. We then define the \emph{metric boundary} $\beta\mathcal{L}_p$ of $\mathcal{L}_p$ to be the disjoint union of the corresponding spheres, i.e.,  
 \[\beta\mathcal{L}_p = \bigcup_{j=1}^{\infty} S(a_j, r_j).\]
Given $\varepsilon>0$,  define the  \emph{thick $p$-adic `inner 
$\varepsilon$-neighborhood'} of $\mathcal{L}_p$ to be
\begin{equation}\label{thick inner tube}
\mathcal{N}_{\varepsilon}=\mathcal{N}_{\varepsilon}(\mathcal L_p):=\{ x\in \mathcal{L}_p : d_p(x, \beta\mathcal{L}_p) < \varepsilon\},
\end{equation}
where $d_p(x, E)=\inf \{ |x-y|_p: y\in E\}$
is the $p$-adic distance of $x \in \mathbb{Q}_p$ to a subset $E \subset \mathbb{Q}_p$.
Then the \emph{volume $\mathcal{V}_{\mathcal L_p}(\varepsilon)$ of the thick inner 
$\varepsilon$-neighborhood} of $\mathcal{L}_p$  is defined to be the Haar measure of $\mathcal{N}_{\varepsilon}$, i.e., 
$\mathcal{V}_{\mathcal L_p}(\varepsilon)=\mu_H(\mathcal{N}_{\varepsilon}).$
\end{definition}

Recall that 
$\zeta_{\mathcal L_p}(1)=\sum_{j=1}^{\infty}p^{-n_j}$ 
is the volume of $\mathcal L_p$ (or rather, of the bounded open subset $\Omega$ of $\mathbb Q_p$ representing $\mathcal L_p$):
$\zeta_{\mathcal L_p}(1)=\mu_H(\mathcal L_p)=\mu_H(\Omega)<\infty.$

\begin{definition}\label{thin inner tube}
Given $\varepsilon >0,$ the \emph{$p$-adic `inner $\varepsilon$-neighborhood'} (or \emph{`inner tube'}) of 
$\mathcal L_p$ is given by 
\begin{equation}\label{equation N}
N_{\varepsilon}=N_{\varepsilon}(\mathcal L_p):=\mathcal N_{\varepsilon}\backslash \beta\mathcal L_p.
\end{equation}
Then the \emph{volume $V_{\mathcal L_p}(\varepsilon)$ of the inner 
$\varepsilon$-neighborhood} of $\mathcal{L}_p$  is defined to be the Haar measure of $N_{\varepsilon}$, i.e., 
\begin{equation}\label{volume}
V_{\mathcal L_p}(\varepsilon):=\mu_H(N_{\varepsilon})=\mathcal V_{\mathcal L_p}(\varepsilon)-\mu_H(\beta \mathcal L_p).
\end{equation}
\end{definition}

We next state the nonarchimedean counterpart  of \cite[Eq.~(3.2)]{LapPo}
 (see also \cite[Eq. (8.1)]{L-vF2}), which is the key result in \cite{LapLu3} that will enable us to obtain an appropriate $p$-adic analog of the fractal tube formula as well as of the notion of Minkowski dimension and content (see \S\ref{exact tf} and \S\ref{amc}).

\begin{theorem}[Volume of  inner tubes]\label{thin}
Let $ \mathcal{L}_p=\bigcup_{j=1}^{\infty}  B(a_j, p^{-n_j})$ be a $p$-adic fractal string. Then, for any $\varepsilon >0,$ we have
\begin{eqnarray}\label{volumeequation}
V_{\mathcal L_p}(\varepsilon)
&=& p^{-1}\left(\zeta_{\mathcal L_p}(1)-\sum_{j=1}^k   p^{-n_j}\right),\label{equation2}
    \end{eqnarray} 
  where $k=k(\varepsilon)$ is the largest   integer such that
 $n_k \leq \log_p\varepsilon^{-1}$.
\end{theorem}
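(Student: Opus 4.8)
The plan is to reduce the computation to a single convex component and then sum. Write each interval as the disjoint union $B(a_j,p^{-n_j}) = C_j \sqcup S_j$, where $C_j := a_j + p^{n_j+1}\mathbb Z_p$ is the \emph{core} subball, consisting of the points $x$ with $|x-a_j|_p \le p^{-(n_j+1)}$, and $S_j = S(a_j,p^{-n_j})$ is the sphere, consisting of the points with $|x-a_j|_p = p^{-n_j}$; this decomposition is exactly the splitting off of the $b=0$ term in the expansion $\mathbb Z_p = \bigcup_{b=0}^{p-1}(b+p\mathbb Z_p)$ of Equation (\ref{decomposition}), so that $\mu_H(C_j) = p^{-(n_j+1)}$. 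Since the balls $B_j$ are pairwise disjoint and each $S_i \subseteq B_i$, one has $B_j \cap S_i = \emptyset$ for $i \ne j$, whence $\mathcal L_p \setminus \beta\mathcal L_p = \bigcup_{j=1}^\infty C_j$. Thus $N_\varepsilon = \{x \in \bigcup_j C_j : d_p(x,\beta\mathcal L_p) < \varepsilon\}$, and everything comes down to evaluating $d_p(x,\beta\mathcal L_p)$ for a core point $x \in C_j$.

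The heart of the argument --- and the step I expect to be the main obstacle --- is the geometric claim that $d_p(x, \beta\mathcal L_p) = p^{-n_j}$ for every $x \in C_j$, a value depending only on the index $j$ of the component containing $x$. This rests entirely on the ultrametric (strong triangle) inequality. First, for $y \in S_j$ one has $|x - a_j|_p \le p^{-(n_j+1)} < p^{-n_j} = |y - a_j|_p$, so the ``all triangles are isoceles'' property forces $|x-y|_p = \max\{|x-a_j|_p,\, |y-a_j|_p\} = p^{-n_j}$ for \emph{every} $y \in S_j$; hence $d_p(x,S_j) = p^{-n_j}$ exactly. Second, for $i \ne j$ the balls $B_i$ and $B_j$ are disjoint, which in an ultrametric space forces $|a_i - a_j|_p > \max\{p^{-n_i},\, p^{-n_j}\}$; then for any $y \in S_i \subseteq B_i$ one gets $|x - y|_p = |a_i - a_j|_p > p^{-n_j}$, so $d_p(x, S_i) > p^{-n_j}$. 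Taking the infimum over all spheres, the minimum is attained on $x$'s own sphere and equals $p^{-n_j}$, as claimed. The subtlety here is to verify that no \emph{other} component can be closer than the point's own sphere; the separation estimate for disjoint $p$-adic balls is precisely what rules this out.

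Granting this, the conclusion is a bookkeeping step. Because $d_p(x,\beta\mathcal L_p) = p^{-n_j}$ is constant on $C_j$, the component $C_j$ lies entirely in $N_\varepsilon$ when $p^{-n_j} < \varepsilon$ and is entirely disjoint from it otherwise, so $N_\varepsilon = \bigcup_{\,j:\, p^{-n_j} < \varepsilon} C_j$ (a disjoint union). The condition $p^{-n_j} < \varepsilon$ is equivalent to $n_j > \log_p\varepsilon^{-1}$, and since the lengths satisfy $p^{-n_1} \ge p^{-n_2} \ge \cdots$ the exponents $n_j$ are nondecreasing; hence the indices satisfying $n_j \le \log_p\varepsilon^{-1}$ form the initial segment $\{1,\dots,k\}$ with $k = k(\varepsilon)$ as defined, and those with $p^{-n_j} < \varepsilon$ are exactly $j \ge k+1$ (the boundary case $p^{-n_j} = \varepsilon$ correctly falls on the $j \le k$ side). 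Using countable additivity of $\mu_H$ together with $\mu_H(C_j) = p^{-(n_j+1)}$, we obtain
\[
V_{\mathcal L_p}(\varepsilon) = \sum_{j=k+1}^{\infty} p^{-(n_j+1)} = p^{-1}\Bigl(\sum_{j=1}^\infty p^{-n_j} - \sum_{j=1}^k p^{-n_j}\Bigr) = p^{-1}\Bigl(\zeta_{\mathcal L_p}(1) - \sum_{j=1}^k p^{-n_j}\Bigr),
\]
which is the desired formula; the case $k=0$ (when $\varepsilon > p^{-n_1}$) is included and gives $V_{\mathcal L_p}(\varepsilon) = p^{-1}\zeta_{\mathcal L_p}(1)$.
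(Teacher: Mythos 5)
Your proof is correct: the decomposition $B_j = C_j \sqcup S_j$, the ultrametric computation showing $d_p(x,\beta\mathcal L_p)=p^{-n_j}$ for $x\in C_j$ (including the separation estimate $|a_i-a_j|_p>\max\{p^{-n_i},p^{-n_j}\}$ ruling out interference from other components), and the final summation all check out, and this is essentially the argument of the paper, which states Theorem \ref{thin} without proof and defers to \cite{LapLu3}, where the same component-by-component ultrametric computation is carried out.
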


\begin{remark}\label{limit}
Note that  $\lim_{\varepsilon \to 0^+}V_{\mathcal L_p}(\varepsilon)=0,$ which justifies  Definition \ref{thin inner tube}; see \cite{LapLu3}. Further observe that even though `the' metric boundary may depend on the choice of the centers $a_j$ ($j\in \mathbb N^*$), both $\mathcal V_{\mathcal L_p}(\varepsilon)$ and  $V_{\mathcal L_p}(\varepsilon)$ are indepedent of this choice (in light of Equation (\ref{volumeequation})). 
\end{remark}

\begin{example}[The explicit tube formula for 3-adic Cantor string]\label{cantor volume}
Let $\varepsilon >0$.  Then, by Theorem \ref{thin}, we have 
\begin{equation}
V_{\mathcal {CS}_3}(\varepsilon)=\frac{1}{3}\sum_{n=k+1}^{\infty}
\frac{2^{n-1}}{3^n}
=\frac{1}{3}\left( \frac{2}{3}\right)^k,
\end{equation}
where $k:=[\log_3\varepsilon^{-1}]$. 
Let $x:=\log_3\varepsilon^{-1}=k+\{x\}$, where $\{x\}$ is the fractional part of $x$. Then a simple computation shows that
$\left(\frac{2}{3}\right)^x=\varepsilon^{1-D} ~\mbox{and}~ e^{2\pi i n x}=\varepsilon^{-in\mathbf{p}}$, 
with $D=\log_32$ and $\mathbf{p}=2\pi /\log 3$ as in Example \ref{cantor string}.
Using the Fourier expansion for the periodic function $b^{-\{x\}}$, as given by \cite[Eq. (1.13)]{L-vF2}, for $b=3^{-1}$
and the above value of $x$, we obtain an expansion in terms of the complex dimensions $\omega=D+in\mathbf{p}$ of $\mathcal{CS}_3$:

\begin{eqnarray}
V_{\mathcal{CS}_3}(\varepsilon)\nonumber
&=& \frac{3^{-1}}{2\log 3}\sum_{n\in \mathbb Z}\frac{\varepsilon^{1-D-in\textbf{p}}} {1-D-in\textbf{p}}\\
&=& \frac{1}{ 6\log 3} \sum_{\omega \in \mathcal D_{\mathcal{CS}_3}}\frac{\varepsilon^{1-\omega  }}{1-\omega}
\label{tfc2}
\end{eqnarray}
since $\mathcal D_{\mathcal{CS}_3}$ is given by (\ref{cd cantor}).
\end{example}
  
 \section{Explicit Tube Formulas for $p$-Adic Fractal Strings} \label{explicit tf} 
 The following result is the counterpart in this context of Theorem 8.1 of \cite{L-vF2}, the distributional tube formula for real fractal strings.  It is established in \cite{LapLu3} by using, in particular, the extended distributional explicit formula of \cite[Thms.~5.26 and 5.27]{L-vF2}, along with the expression for the volume of thin inner $\varepsilon$-tubes stated in Theorem \ref{thin}.

\begin{theorem}
\label{dtf}
Let $\mathcal L_p$ be a languid $p$-adic fractal string.
Further assume that $\sigma_{\mathcal L_p}<1.$\footnote
{Recall from Remark \ref{3.7} that we always have $\sigma_{\mathcal L_p}\leq 1$. Moreover, if $\mathcal L_p$ is self-similar,  then $\sigma_{\mathcal L_p}< 1$ (in light of \cite{PW} and the definition of $\sigma_{\mathcal L_p}$).}
Then the volume of the thin inner  $\varepsilon$-neighborhood of $\mathcal L_p$ is given by
\begin{equation}\label{detf}
V_{\mathcal L_p}(\varepsilon)= \sum_{ \omega \in \mathcal{D}_{\mathcal{L}_p} (W)} \res 
\left (
\frac{p^{-1} \zeta_{\mathcal{L}_p} (s) \varepsilon^{1-s}} {1-s}; \omega
\right) 
+ \mathcal{R}_p (\varepsilon),
\end{equation}
where $\mathcal D_{\mathcal L_p}(W)$ is the set of visible complex dimensions of $\mathcal L_p.$ 
Here, the distributional error term is given by 
\begin{equation}\label{error term}
\mathcal R_p(\varepsilon)=\frac{1}{2\pi i}
\int_{S}\frac{p^{-1} \zeta_{\mathcal{L}_p} (s) \varepsilon^{1-s}} {1-s}ds
\end{equation}
and is estimated distributionally\footnote
{As in \cite[Defn.~5.29]{L-vF2}.}
 by
\begin{equation}\label{estimate}
\mathcal{R}_p(\varepsilon)=O(\varepsilon^{1-\sup S}), \hspace{1cm} \mbox{as} ~\varepsilon \rightarrow 0^+.
\end{equation}
Moreover, if $\mathcal L_p$ is strongly languid {\rm(}which is the case of all $p$-adic self-similar strings; see \S3 and \S9{\rm)}, then we can take $W=\mathbb C$ and
$\mathcal{R}_p(\varepsilon)\equiv 0$.
\end{theorem}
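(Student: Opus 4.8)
The plan is to reduce everything to the exact, finite expression for $V_{\mathcal{L}_p}(\varepsilon)$ furnished by Theorem~\ref{thin} and then feed it into the extended distributional explicit formula of \cite[Thms.~5.26 and 5.27]{L-vF2}. Writing $\ell_j=p^{-n_j}$ for the lengths and using the ordering \eqref{sequence of lengths}, Theorem~\ref{thin} says precisely that
\begin{equation*}
V_{\mathcal{L}_p}(\varepsilon)=p^{-1}\Bigl(\zeta_{\mathcal{L}_p}(1)-\sum_{j=1}^{k}\ell_j\Bigr)=p^{-1}\sum_{j\,:\,\ell_j<\varepsilon}\ell_j ,
\end{equation*}
since $k=k(\varepsilon)$ is exactly the number of intervals of length $\geq\varepsilon$. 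First I would obtain a Mellin--Perron representation of this tail sum. For a single interval of length $\ell$ and any $c$ with $0<c<1$, a contour shift of the elementary Perron integral (closing to the right and picking up the pole of $1/(1-s)$ at $s=1$ when $\ell<\varepsilon$, and to the left with no enclosed pole when $\ell>\varepsilon$) gives
\begin{equation*}
\frac{1}{2\pi i}\int_{c-i\infty}^{c+i\infty}\frac{\ell^{s}\,\varepsilon^{1-s}}{1-s}\,ds=\ell\cdot\mathbf{1}_{\{\ell<\varepsilon\}} .
\end{equation*}
Summing over $j$ for $\sigma_{\mathcal{L}_p}<c<1$, where $\zeta_{\mathcal{L}_p}$ converges absolutely on the line $\Re(s)=c$, then identifies
\begin{equation*}
V_{\mathcal{L}_p}(\varepsilon)=\frac{1}{2\pi i}\int_{c-i\infty}^{c+i\infty}\frac{p^{-1}\zeta_{\mathcal{L}_p}(s)\,\varepsilon^{1-s}}{1-s}\,ds ,
\end{equation*}
i.e.\ $V_{\mathcal{L}_p}$ as the inverse Mellin transform of $p^{-1}\zeta_{\mathcal{L}_p}(s)/(1-s)$ tested against $\varepsilon^{1-s}$.

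Next I would shift the contour leftward from $\Re(s)=c$ to the screen $S$. Because $\sigma_{\mathcal{L}_p}<1$, I can choose $c\in(\sigma_{\mathcal{L}_p},1)$, so the initial line lies to the right of every complex dimension and, crucially, to the left of the pole of the integrand at $s=1$; that pole (coming from $1/(1-s)$, where $\zeta_{\mathcal{L}_p}$ is holomorphic since $\sigma_{\mathcal{L}_p}<1$) is therefore never crossed and contributes nothing. The poles swept out between $\Re(s)=c$ and $S$ are exactly the visible complex dimensions $\omega\in\mathcal{D}_{\mathcal{L}_p}(W)$, each contributing $\res\bigl(p^{-1}\zeta_{\mathcal{L}_p}(s)\varepsilon^{1-s}/(1-s);\omega\bigr)$, while the leftover integral along $S$ is by definition the error term $\mathcal{R}_p(\varepsilon)$ of \eqref{error term}. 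The languidity hypothesis supplies the polynomial growth of $\zeta_{\mathcal{L}_p}$ needed to justify the shift and to bound the screen integral: on $S$ one has $|\varepsilon^{1-s}|=\varepsilon^{1-\Re(s)}\leq\varepsilon^{1-\sup S}$, which yields the distributional estimate $\mathcal{R}_p(\varepsilon)=O(\varepsilon^{1-\sup S})$ of \eqref{estimate}.

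The main obstacle is that, under bare languidity, both the Perron integral for a single term (whose integrand decays only like $|1-s|^{-1}$ along the vertical line) and the interchange of this integral with the sum over $j$ converge only in the distributional sense, so none of these manipulations is literally valid pointwise. This is exactly what the extended distributional explicit formula of \cite[Thms.~5.26 and 5.27]{L-vF2} is designed to handle, and the real content of the proof is to verify its hypotheses here: that $p^{-1}\zeta_{\mathcal{L}_p}(s)/(1-s)$ is languid of the appropriate order, that $\varepsilon^{1-s}$ lies in the admissible class of test functions, and then to invoke that theorem to make the contour shift, the residue sum, and the error estimate rigorous as an identity of distributions in $\varepsilon$. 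The convergence (in the relevant summation order) of $\sum_{\omega\in\mathcal{D}_{\mathcal{L}_p}(W)}\res(\cdots)$ is part of the same package.

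Finally, for the strongly languid case I would let the screen recede to $-\infty$. Using a family of vertical screens $S_m$ at $\Re(s)=-m$ together with the strongly languid bound $|\zeta_{\mathcal{L}_p}(-m+it)|\leq CA^{|t|}$, the screen integral along $S_m$ carries the factor $|\varepsilon^{1-s}|=\varepsilon^{1+m}$; interpreted distributionally, i.e.\ paired with an admissible test function whose Mellin transform decays faster than $A^{-|t|}$ and so absorbs the exponential growth in $t$, this integral is driven to $0$ as $m\to\infty$ for $\varepsilon$ in the relevant range, the threshold being governed by $A$. Since neither $V_{\mathcal{L}_p}(\varepsilon)$ nor the residue sum depends on $m$, the error term must vanish identically, yielding $\mathcal{R}_p(\varepsilon)\equiv0$ with $W=\mathbb{C}$, i.e.\ the exact formula \eqref{detf} summed over all of $\mathcal{D}_{\mathcal{L}_p}$.
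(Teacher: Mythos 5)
Your proposal is correct and follows essentially the same route as the paper, which establishes Theorem \ref{dtf} (in \cite{LapLu3}) precisely by combining the tail-sum expression $V_{\mathcal L_p}(\varepsilon)=p^{-1}\sum_{j:\,p^{-n_j}<\varepsilon}p^{-n_j}$ from Theorem \ref{thin} with the extended distributional explicit formula of \cite[Thms.~5.26 and 5.27]{L-vF2}. Your Perron-integral heuristic, the observation that $\sigma_{\mathcal L_p}<1$ keeps the pole of $1/(1-s)$ at $s=1$ outside the swept region, and the receding-screen argument in the strongly languid case all match the intended mechanism.
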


 \begin{corollary}
 \label{ftf}
 If, in addition to the hypotheses in Theorem \ref{dtf}, we assume that all the visible complex dimensions of 
 $\mathcal{L}_p$ are  simple, then  
\begin{equation}\label{pftf} 
V_{\mathcal L_p}(\varepsilon)= \sum_{ \omega \in \mathcal{D}_{\mathcal L_p}(W)} 
c_{\omega} \frac{ \varepsilon^{1-\omega}} {1-\omega } 
+ \mathcal{R}_p (\varepsilon),
\end{equation}
where $c_{\omega}=p^{-1}\res\left(\zeta_{\mathcal L_p}; \omega\right)$. 
Here, the error term $\mathcal R_p$ is given by {\rm(}\ref{error term}{\rm)}
and is estimated by {\rm(}\ref{estimate}{\rm)} in the languid case. 
Furthermore, we have $ \mathcal{R}_p (\varepsilon) \equiv 0$ in the strongly languid case 
{\rm(}yielding an \emph{exact} tube formula{\rm)},  provided we choose $W=\mathbb C$. 
 \end{corollary}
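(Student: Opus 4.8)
The plan is to deduce Corollary~\ref{ftf} directly from Theorem~\ref{dtf} by specializing the residue computation to the case of simple poles. The only new hypothesis is that every visible complex dimension $\omega\in\mathcal D_{\mathcal L_p}(W)$ is a simple pole of $\zeta_{\mathcal L_p}$; everything else (languidity, $\sigma_{\mathcal L_p}<1$, the distributional identity \eqref{detf}, the error estimate \eqref{estimate}, and the vanishing of $\mathcal R_p$ in the strongly languid case) is inherited verbatim from the theorem. So the entire content of the proof is a local residue calculation term by term.

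First I would fix $\omega\in\mathcal D_{\mathcal L_p}(W)$ and examine the summand
\[
\res\!\left(\frac{p^{-1}\zeta_{\mathcal L_p}(s)\,\varepsilon^{1-s}}{1-s};\,\omega\right).
\]
Since $\sigma_{\mathcal L_p}<1$ and $\mathcal D_{\mathcal L_p}(W)\subset\{\Re(s)\le\sigma_{\mathcal L_p}\}$ (Remark~\ref{3.7}), we have $\omega\neq 1$ for every visible complex dimension, so the factor $1/(1-s)$ is holomorphic and nonzero at $s=\omega$; likewise $\varepsilon^{1-s}$ is entire in $s$. Hence the only source of a pole at $\omega$ is $\zeta_{\mathcal L_p}(s)$ itself. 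Because $\omega$ is assumed simple, $\zeta_{\mathcal L_p}$ has a simple pole there, and the residue of the product equals the residue of $\zeta_{\mathcal L_p}$ at $\omega$ multiplied by the value of the remaining holomorphic factor $p^{-1}\varepsilon^{1-s}/(1-s)$ evaluated at $s=\omega$. This gives
\[
\res\!\left(\frac{p^{-1}\zeta_{\mathcal L_p}(s)\,\varepsilon^{1-s}}{1-s};\,\omega\right)
= \bigl(p^{-1}\res(\zeta_{\mathcal L_p};\omega)\bigr)\,\frac{\varepsilon^{1-\omega}}{1-\omega}
= c_\omega\,\frac{\varepsilon^{1-\omega}}{1-\omega},
\]
with $c_\omega=p^{-1}\res(\zeta_{\mathcal L_p};\omega)$ exactly as in the statement.

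Substituting this evaluated residue back into \eqref{detf} and summing over all $\omega\in\mathcal D_{\mathcal L_p}(W)$ yields \eqref{pftf}, with the error term $\mathcal R_p(\varepsilon)$ unchanged from \eqref{error term} and estimated by \eqref{estimate} in the languid case, and $\mathcal R_p\equiv 0$ (upon taking $W=\mathbb C$) in the strongly languid case. The interchange of summation and the residue operation is legitimate here because the equality \eqref{detf} is already the finished conclusion of Theorem~\ref{dtf}; I am merely rewriting each of its summands. I do not expect any serious obstacle, since this is a routine specialization rather than a fresh argument. The only point demanding a word of justification is precisely the one noted above, namely that $1\notin\mathcal D_{\mathcal L_p}(W)$, so that the factor $1/(1-s)$ never contributes an extra pole or a vanishing denominator; this is guaranteed by the standing hypothesis $\sigma_{\mathcal L_p}<1$ together with the location of the complex dimensions recalled in Remark~\ref{3.7}. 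With that observation in place, the simple-pole residue formula applies cleanly to every term and the corollary follows.
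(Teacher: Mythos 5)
Your proposal is correct and is exactly the intended derivation: the corollary is a routine specialization of Theorem~\ref{dtf}, obtained by evaluating each residue at a simple pole $\omega$ of $\zeta_{\mathcal L_p}$ as $p^{-1}\res(\zeta_{\mathcal L_p};\omega)\,\varepsilon^{1-\omega}/(1-\omega)$, the factor $1/(1-s)$ being holomorphic at $\omega$ since $\Re(\omega)\le\sigma_{\mathcal L_p}<1$. The paper gives no separate proof (it treats the corollary as immediate from the theorem), and your residue computation together with the observation that $1\notin\mathcal D_{\mathcal L_p}(W)$ is precisely the justification required.
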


 \begin{remark}
 In \cite [Ch.~8]{L-vF2}, under different sets of assumptions, both distributional and pointwise tube formulas are obtained for archimedean fractal strings (and also, for archimedean self-similar fractal strings). 
 (See, in particular, Theorems 8.1 and 8.7, along with \S8.4 in \cite{L-vF2}.)
 At least for now, in the nonarchimedean case, we limit ourselves to discussing distributional explicit tube formulas. We expect, however, that under appropriate hypotheses, one should be able to obtain a pointwise fractal tube formula for $p$-adic fractal strings and especially, for $p$-adic self-similar strings. 
 In fact, for the simple examples of the nonarchimedean Cantor and Fibonacci strings, the direct derivation of the fractal tube formula (\ref{pftf})
 yields a formula that is valid pointwise and not just distributionally. (See, in particular, Examples  
  \ref{cantor volume} and \ref{fibonacci}.) We leave the consideration of such possible extensions to a future work.
 \end{remark}

 \begin{example}[The explicit tube formula for 3-adic Cantor string revisited]
 By Equation (\ref{cantor residue}), we have that
 \[\res(\zeta_{\mathcal{CS}_3}; \omega)=\frac{1}{2\log3},\]
 independently of $\omega \in \mathcal{D}_{\mathcal{CS}_3 }$. So, using the last part of Theorem \ref{dtf},  the exact fractal tube formula for the 3-adic Cantor string is found to be 
\begin{equation}\label{tfc}
V_{\mathcal{CS}_3}(\varepsilon)=
\frac{3^{-1}}{2\log3} \sum_{\omega \in \mathcal D_{\mathcal {CS}_3}}\frac{\varepsilon^{1-\omega  }}{1-\omega},
\end{equation}
which is exactly the same as Equation (\ref{tfc2}). 
 
Note that since $\mathcal{CS}_3$ has simple complex dimensions, we may also apply Corollary \ref{ftf} (in the strongly languid case when $W=\mathbb C$) in order  to precisely recover Equation (\ref{tfc}). (Alternatively, we could use Corollary \ref{5.13} in \S\ref{exact tf} below.)

We may rewrite (\ref{tfc2}) or (\ref{tfc}) in the following form (which agrees with the tube formula to be obtained in Corollary \ref{5.13}):
\[
V_{\mathcal{CS}_3}(\varepsilon)=\varepsilon^{1-D}G_{\mathcal{CS}_3}(\log_{3}\varepsilon^{-1}),
\]
where  $G_{\mathcal{CS}_3}$ is the nonconstant periodic function of period 1 on $\mathbb R$ given by 
\[
G_{\mathcal{CS}_3}(x):=\frac{1}{6\log3}\sum_{n\in \mathbb Z}\frac{e^{2\pi inx}}{1-D-in\mathbf{p}}.
\]
Finally, we note that since the Fourier series 
\[
\sum_{n\in \mathbb Z}\frac{e^{2\pi inx}}{1-D-in\mathbf{p}}
\]
is pointwise convergent on $\mathbb R$, the above direct computation of $V_{\mathcal{CS}_3}(\varepsilon)$
shows that (\ref{tfc2}) and (\ref{tfc}) actually hold pointwise rather than distributionally.
\end{example}

 \section{Nonarchimedean Self-Similar  Strings }\label{sss}
 Nonarchimedean (or $p$-adic) self-similar strings form an important class of $p$-adic fractal strings. In this section, we first recall the construction of these strings, as provided in \cite{LapLu2} and \cite{LapLu4}; see  \S\ref{construction}. Later on, we will give an explicit expression for their geometric zeta functions and deduce from it the periodic structure of their poles (or complex dimensions) and zeros, as obtained in 
 \cite{LapLu2}; see \S \ref{gzf sss}--\ref{zeros and poles}. 
 Moreover, in \S\ref{exact tf}, we will deduce from the results of \S\ref{explicit tf} and 
 \S\ref{gzf sss}--\ref{PeriodicityR} the special form of the fractal tube formula for $p$-adic self-similar strings. Finally, in \S\ref{amc}, we will apply this latter result in order to calculate the average Minkowski content of such strings.
 
 \subsection{Geometric Construction}\label{construction}
 
 Before explaining how to construct arbitrary $p$-adic self-similar strings, we need to introduce a definition and a few facts pertaining to $p$-adic similarity transformations.\footnote{The standard definition of self-similarity (in Euclidean space or in more general complete metric spaces) can be found in \cite{Hut} and in \cite{Fal}, for example.} 
 
 \begin{definition}\label{similarity mapping}
A map $\Phi: \mathbb Z_p\longrightarrow \mathbb Z_p$ is called a \emph{similarity contraction mapping} of $\mathbb{Z}_p$ if there is a real number $r \in (0,1)$ such that 
\[
|\Phi(x)-\Phi(y)|_p=r\cdot|x-y|_p,
\]
for all $x, y \in \mathbb Z_p$. 
\end{definition}
 
Unlike in Euclidean space (and in the real line $\mathbb R$, in particular), it is not true that   every similarity transformation of $\mathbb Q_p$ (or of $\mathbb Z_p$) is necessarily affine.
Actually, in the nonarchimedean world (for example, in $\mathbb Q_p^d$, with $d\geq 1$), and in the $p$-adic line $\mathbb Q_p$, in particular, there are a lot of similarities which are not affine. 
However, it is known (see, e.g., \cite{Sch}) that every analytic similarity must be affine.\footnote
{Here, a map $f:\mathbb Q_p \longrightarrow \mathbb Q_p$ is said to be analytic if it admits a convergent power series expansion about 0, and with coefficients in $\mathbb Q_p$, that is convergent in all of $\mathbb Q_p$.}
    Hence, from now on, we will be working with a similarity contraction mapping 
     $\Phi: \mathbb Z_p\longrightarrow \mathbb Z_p$ that is affine. 
     Thus we assume that there exist constants $a,b\in \mathbb Z_p$ with $|a|_p<1$ such that    
    $\Phi(x)=ax+b$ for all $x\in \mathbb Z_p$. Regarding the scaling factor $a$ of the contraction, it is well known that it can be written as $a=u\cdot p^n,$ for some unit $u\in \mathbb{Z}_p$ (i.e., $|u|_p=1$) and $n\in \mathbb N^{*}$ (see \cite{Neu}).  Then $r=|a|_p=p^{-n}.$  We summarize this fact in the following lemma:
 
  \begin{lemma}\label{scaling} 
  Let $\Phi(x)=ax+b$ be an affine similarity contraction mapping of $\mathbb {Z}_p$ with the scaling ratio $r$. Then $b\in \mathbb Z_p$ and  $a\in p\mathbb Z_p,$ and the scaling factor is $r=|a|_p=p^{-n}$ for some $n\in \mathbb N^{*}$. 
     \end{lemma}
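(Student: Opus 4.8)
The plan is to read off the conclusion directly from the two structural hypotheses, namely that $\Phi(x)=ax+b$ is a self-map of $\mathbb{Z}_p$ and that it scales all $p$-adic distances by the fixed factor $r\in(0,1)$. Since the affineness of $\Phi$ has already been justified (via the cited fact that every analytic similarity of $\mathbb{Q}_p$ is affine), I may take the form $\Phi(x)=ax+b$ with $a,b\in\mathbb{Q}_p$ as given and then \emph{derive} the membership and magnitude constraints on $a$ and $b$, rather than assume them.

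First I would make the contraction identity explicit. For any $x,y\in\mathbb{Z}_p$, multiplicativity of $|\cdot|_p$ gives
\[
|\Phi(x)-\Phi(y)|_p=|a(x-y)|_p=|a|_p\,|x-y|_p .
\]
Comparing with the defining relation $|\Phi(x)-\Phi(y)|_p=r\,|x-y|_p$ and choosing any pair $x\neq y$ (so that $|x-y|_p>0$ may be cancelled) yields $|a|_p=r$ immediately, which already establishes the identity $r=|a|_p$. To locate $b$, I would evaluate $\Phi$ at $0\in\mathbb{Z}_p$: since $\Phi$ maps $\mathbb{Z}_p$ into itself, $b=\Phi(0)\in\mathbb{Z}_p$.

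Next I would invoke the discreteness of the value group of $\mathbb{Q}_p$. As $r>0$ we have $a\neq0$, and the range of $|\cdot|_p$ on $\mathbb{Q}_p\setminus\{0\}$ is exactly $\{p^{m}:m\in\mathbb{Z}\}$; hence $|a|_p=p^{-n}$ for a unique $n\in\mathbb{Z}$ (namely $n$ the $p$-adic valuation of $a$). The hypothesis $r<1$ forces $|a|_p<1$, i.e.\ $n\geq1$, so $n\in\mathbb{N}^{*}$ and $r=|a|_p=p^{-n}$, as claimed. Writing the standard valuation factorization $a=u\,p^{n}$ with $|u|_p=1$ then exhibits $a\in p\mathbb{Z}_p$, since $|a|_p=p^{-n}\leq p^{-1}=|p|_p$ is precisely the condition for membership in $p\mathbb{Z}_p$; in particular $a\in\mathbb{Z}_p$ as well.

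I do not anticipate a substantive obstacle: the argument is a one-line computation combined with the elementary arithmetic of $|\cdot|_p$. The only conceptual point worth emphasizing—and the genuinely nonarchimedean feature—is the use of the discreteness of the value group in the third step. Unlike in $\mathbb{R}$, where every $r\in(0,1)$ can serve as a contraction ratio, here the isometry-up-to-scaling condition forces $r$ to be a negative integer power of $p$. This is exactly what pins down $n\in\mathbb{N}^{*}$ and delivers $a\in p\mathbb{Z}_p$, so I would present that step as the heart of the lemma.
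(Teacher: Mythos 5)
Your proposal is correct and follows essentially the same route as the paper, which establishes the lemma in the text immediately preceding it via the valuation factorization $a=u\cdot p^{n}$ with $|u|_p=1$ and $n\in\mathbb N^{*}$, so that $r=|a|_p=p^{-n}$. Your version is in fact slightly more careful, since you actually derive $|a|_p=r$ from the similarity identity and $b=\Phi(0)\in\mathbb Z_p$ from the self-map condition, whereas the paper simply takes $a,b\in\mathbb Z_p$ with $|a|_p<1$ as given before invoking the same discreteness of the value group.
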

 
 \begin{figure}[ht]\label{p-adic self-similar construction}
\psfrag{dots}{$\vdots$}
\psfrag{Zp}{$\mathbb Z_p$}
\psfrag{P1}{$\Phi_1(\mathbb Z_p$)}
\psfrag{dots}{$\cdots$}
\psfrag{PJ}{$\Phi_N(\mathbb Z_p$)}
\psfrag{G1}{$G_1$}
\psfrag{GK}{$G_K$}
\psfrag{P11}{$\Phi_{11}\mathbb Z_p$}
\psfrag{P1J}{$\Phi_{1N}\mathbb Z_p$}
\psfrag{G11}{$\Phi_1G_1$}
\psfrag{G1K}{$\Phi_1G_K$}
\psfrag{PJ1}{$\Phi_{N1}\mathbb Z_p$}
\psfrag{PJJ}{$\Phi_{NN} \mathbb Z_p $}
\psfrag{GJ1}{ $\Phi_NG_1$}
\psfrag{GJK}{ $\Phi_NG_K$}
\psfrag{vdots}{$\vdots$}
\raisebox{-1cm}
{\psfig{figure=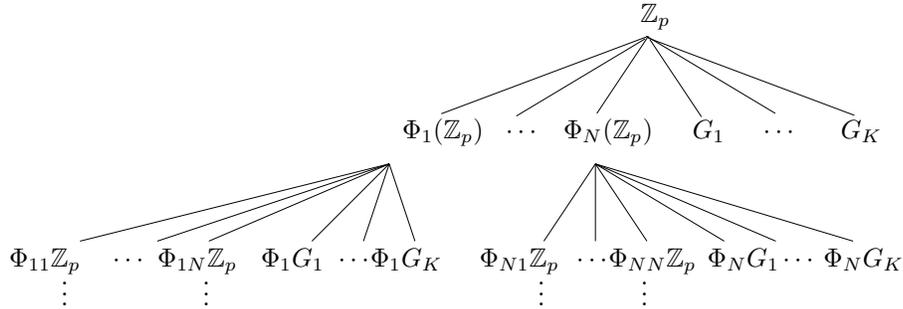, height=4cm}}
\caption{Construction of a  $p$-adic self-similar fractal string.} \end{figure}

For simplicity, let us take the unit interval (or ball) $\mathbb{Z}_p$ in $\mathbb{Q}_p$ and construct a 
\emph{ $p$-adic} (or\emph{ nonarchimedean}) \emph{self-similar  string} $\mathcal{L}_p$ as follows (see \cite{LapLu2}).\footnote
{In the sequel, $\mathcal L_p$ is interchangeably called a \emph{$p$-adic} or \emph{nonarchimedean} self-similar string.}
  Let $N\geq2$ be an integer and  $\Phi_1, \dots, \Phi_N: \mathbb Z_p\longrightarrow \mathbb Z_p$ be $N$ affine similarity contraction mappings with the respective scaling ratios $r_1, \dots, r_N \in (0,1)$ satisfying 
\begin{equation}\label{ratios}
1>r_1\geq r_2\geq \cdots \geq r_N>0;
\end{equation}
see Figure 2.  Assume that 
\begin{equation}\label{rho}
\sum_{j=1}^N r_j <1,
\end{equation}
and the images $\Phi_j(\mathbb{Z}_p)$ of $\mathbb{Z}_p$ do not overlap, i.e., 
$\Phi_j(\mathbb{Z}_p) \cap \Phi_l(\mathbb{Z}_p)= \emptyset$
for all $j\neq l$. 
Note that it follows from Equation (\ref{rho}) that $\bigcup_{j=1}^N \Phi_j(\mathbb Z_p)$ is not all of $\mathbb Z_p$.
We therefore have the following (nontrivial) decomposition of
$\mathbb Z_p$ into disjoint $p$-adic intervals:
\begin{equation}\label{first generation}         
 \mathbb Z_p = \bigcup_{j=1}^N \Phi_j(\mathbb Z_p)\cup \bigcup_{k=1}^K G_k,
\end{equation}
where $G_k$ is defined below. 

In a procedure reminiscent of the construction of the ternary Cantor set, we then subdivide the interval $\mathbb{Z}_p$ by means of the subintervals $\Phi_j(\mathbb{Z}_p)$.
Then the convex\footnote
{We choose the convex components instead of the connected components because $\mathbb{Z}_p$ is totally disconnected. Naturally, no such distinction is necessary in the archimedean case;  cf.~\cite[\S2.1.1]{L-vF2}.
 Here and elsewhere in this paper,
  a subset $E$ of $\mathbb Q_p$ is said to be `convex' if for every $x,y\in E$,
   the $p$-adic segment $\{tx+(1-t)y: t\in \mathbb Z_p\}$ lies entirely in $E$.} 
components of 
\[
\mathbb Z_p \backslash \bigcup_{j=1}^N \Phi_j(\mathbb Z_p)
\]
are the first \emph{substrings} of the $p$-adic  self-similal string $\mathcal L_p$, say $G_1, G_2, \ldots, G_K,$ with $K\geq 1$.  These intervals $G_k$ are called the \emph{generators},  the deleted intervals in the first generation of the construction of $\mathcal L_p.$\footnote
{Their archimedean counterparts are called `gaps' in  \cite[Ch.~2 and \S8.4]{L-vF2},
 where archimedean self-similar strings are introduced.}
  The length of each $G_k$ is denoted by $g_k$; so that $g_k=\mu_H(G_k)$.\footnote
{We note that the lengths $g_k$ ($k=1,2,\ldots, K$) will sometimes be called the (nonarchimedean) `gaps' or `gap sizes' in the sequel.}
Without loss of generality, we may assume that the lengths
$g_1, g_2, \dots, g_K$ of the first substrings (i.e., intervals) of $\mathcal L_p$ satisfy
\begin{equation}\label{gaps}
1>g_1\geq g_2\geq \cdots \geq g_K >0. 
\end{equation}
It follows from Equation (\ref{first generation}) and the additivity of Haar measure $\mu_H$ that 
\begin{equation}\label{gap identity}
\sum_{j=1}^N r_j + \sum_{k=1}^K g_k =1.
\end{equation}
We then repeat this process with each of the remaining subintervals 
$\Phi_j(\mathbb{Z}_p)$ of $\mathbb Z_p,$ for $j=1,2,\ldots,N$. 
And so on, ad infinitum. 
As a result, we obtain a \emph{$p$-adic  self-similar  string} $\mathcal{L}_p=l_1, l_2, l_3, \dots,$
consisting of intervals of length $l_n$ given by 
\begin{equation}\label{length}
r_{\nu_1}r_{\nu_2}\cdots r_{\nu_q}g_k, 
\end{equation}
for $k=1, \ldots, K$ and all choices of $q\in \mathbb N$ and $\nu_1, \ldots, \nu_q \in \{1, \ldots, N\}$.
Thus, the lengths are of the form 
$r_1^{e_1}\dots r_N^{e_N}g_k$ with $e_1, \dots, e_N \in \mathbb{N}$ (but not all zero).

In \cite{LapLu2}, the classic notion of self-similarity is extended to the nonarchimedean setting, much as in \cite{Hut}, where the underlying complete metric space is allowed to be arbitrary. We note that the next result follows by applying the classic Contraction Mapping Principle to the complete metric space of all nonempty compact subsets of $\mathbb Z_p.$ (Note that  $\mathbb{Z}_p$ itself is complete since it is a compact metric space.)

 \begin{theorem}
  \label{Hutchinson}
There is a unique nonempty compact subset $\mathcal S_p$ of $\mathbb Z_p$ such that 
\[
\mathcal S_p=\bigcup_{j=1}^N \Phi_j(\mathcal S_p).
\]
The set $\mathcal S_p$ is called the $p$-adic self-similar set associated with the self-similar system 
$\mathbf\Phi=\{\Phi_1, \ldots, \Phi_N\}.$ It is also called the $\mathbf\Phi$-invariant set.
\end{theorem}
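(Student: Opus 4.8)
The plan is to realize $\mathcal{S}_p$ as the unique fixed point of the \emph{Hutchinson operator} attached to the system $\mathbf{\Phi}=\{\Phi_1,\dots,\Phi_N\}$, acting on the hyperspace of nonempty compact subsets of $\mathbb{Z}_p$ endowed with the Hausdorff metric, and then to invoke the Contraction Mapping Principle. Concretely, let $\mathcal{K}(\mathbb{Z}_p)$ denote the collection of all nonempty compact subsets of $\mathbb{Z}_p$, and for $A,B\in\mathcal{K}(\mathbb{Z}_p)$ set
\[
d_H(A,B)=\max\left\{\sup_{x\in A} d_p(x,B),\ \sup_{y\in B} d_p(y,A)\right\},
\]
where $d_p(x,E)=\inf_{y\in E}|x-y|_p$ as in Definition \ref{volume definition}. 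The first step is to record that $(\mathcal{K}(\mathbb{Z}_p),d_H)$ is a complete metric space. This requires only that the underlying space $(\mathbb{Z}_p,d_p)$ be complete, which holds since $\mathbb{Z}_p$ is compact (as noted just after Equation \eqref{decomposition}); in fact, the compactness of $\mathbb{Z}_p$ renders $\mathcal{K}(\mathbb{Z}_p)$ itself compact, and a fortiori complete. This is the standard completeness theorem for hyperspaces, and its proof is insensitive to whether the ambient metric is archimedean or ultrametric.

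Next I would introduce the operator $F:\mathcal{K}(\mathbb{Z}_p)\to\mathcal{K}(\mathbb{Z}_p)$ defined by
\[
F(A)=\bigcup_{j=1}^N \Phi_j(A),
\]
and check that it is well defined: each $\Phi_j$ is a self-map of $\mathbb{Z}_p$ that is Lipschitz by Definition \ref{similarity mapping}, hence continuous, so $\Phi_j(A)$ is a nonempty compact subset of $\mathbb{Z}_p$ whenever $A$ is, and a finite union of such sets is again nonempty and compact. The crux of the argument is to verify that $F$ is a contraction. For this I would use two elementary properties of the Hausdorff metric. First, each affine similarity $\Phi_j$ satisfies $d_H(\Phi_j(A),\Phi_j(B))=r_j\, d_H(A,B)$, which is immediate from the defining relation $|\Phi_j(x)-\Phi_j(y)|_p=r_j|x-y|_p$ (Lemma \ref{scaling} guarantees this exact-scaling identity holds on the nose, since $\Phi_j$ is affine with $r_j=|a_j|_p$). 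Second, one has the subadditivity estimate $d_H\big(\bigcup_{j} A_j,\bigcup_{j} B_j\big)\le\max_{1\le j\le N} d_H(A_j,B_j)$. Combining these yields
\[
d_H(F(A),F(B))\le\max_{1\le j\le N} r_j\, d_H(A,B)=r_1\, d_H(A,B),
\]
where $r_1=\max_j r_j<1$ by the ordering \eqref{ratios}. Thus $F$ is a contraction with ratio $r_1\in(0,1)$.

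Finally, the Contraction Mapping Principle applied to the complete metric space $(\mathcal{K}(\mathbb{Z}_p),d_H)$ furnishes a unique point $\mathcal{S}_p\in\mathcal{K}(\mathbb{Z}_p)$ with $F(\mathcal{S}_p)=\mathcal{S}_p$, which is exactly the asserted nonempty compact set satisfying $\mathcal{S}_p=\bigcup_{j=1}^N\Phi_j(\mathcal{S}_p)$; uniqueness of the fixed point delivers uniqueness of $\mathcal{S}_p$. I do not anticipate any genuine obstacle here, as the whole argument is purely metric and reproduces Hutchinson's classical construction verbatim. The only points deserving care are the two structural facts about the Hausdorff metric, namely the completeness of the hyperspace and the contraction estimate, both of which hold in any complete metric space and therefore transfer to the $p$-adic setting without modification. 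It is worth emphasizing that, although $p$-adic similarities need not be affine in general, the construction has already restricted attention to affine $\Phi_j$ (cf.\ Lemma \ref{scaling}), so the exact-scaling relation used in the contraction step is available and the estimate goes through cleanly.
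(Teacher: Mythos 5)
Your proposal is correct and follows essentially the same route as the paper, which itself only remarks that the result "follows by applying the classic Contraction Mapping Principle to the complete metric space of all nonempty compact subsets of $\mathbb Z_p$" (Hutchinson's argument). You have simply filled in the standard details—completeness of the hyperspace under the Hausdorff metric and the contraction estimate with ratio $r_1<1$—all of which are accurate.
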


The relationship between the $p$-adic self-similar string $\mathcal L_p$ and the above $p$-adic self-similar set $\mathcal S_p$ is given by the following theorem, also obtained in \cite{LapLu2}:\footnote
{In Theorem \ref{complementary},
 $\mathcal L_p$ is not viewed as a sequence of lengths but is viewed instead as the open set which is canonically given by a disjoint union of intervals (its $p$-adic convex components), as described in the above construction of a $p$-adic self-similar string.}

\begin{theorem}\label{complementary}
 {\rm(}i{\rm)}  $\mathcal L_p=\mathbb Z_p \backslash \mathcal S_p,$
the complement of $\mathcal S_p$ in $\mathbb Z_p$. 

 {\rm(}ii{\rm)} 
 $\mathcal{L}_p=\bigcup_{\alpha=0}^{\infty}\bigcup_{w\in W_{\alpha}}\bigcup_{k=1}^K\Phi_w(G_k),$
while
$\mathcal S_p=\bigcap_{\alpha=0}^{\infty}\bigcup_{w\in W_{\alpha}} \Phi_w(\mathbb{Z}_p),$
where $W_{\alpha}=\{1,2, \ldots, N\}^{\alpha}$ denotes the set of all finite words on $N$ symbols, of length $\alpha,$ and
 $\Phi_w:=\Phi_{w_{\alpha}} \circ \cdots  \circ \Phi_{w_1}$ 
 for~ $w=(w_1, \dots, w_{\alpha})\in W_{\alpha}.$
\end{theorem}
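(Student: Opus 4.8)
The plan is to prove the two displayed formulas in part (ii) first, and then to deduce the complementation statement (i) from them. The formula for $\mathcal{S}_p$ is a nonarchimedean instance of the standard description of an attractor as a decreasing intersection of ``cylinder'' unions, while the formula for $\mathcal{L}_p$ is essentially a transcription of the inductive construction preceding the theorem; statement (i) then follows by a telescoping argument together with the disjointness built into the first--generation decomposition (\ref{first generation}).

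First I would set $F_\alpha := \bigcup_{w\in W_\alpha}\Phi_w(\mathbb{Z}_p)$, so that $F_0=\mathbb{Z}_p$ and, since the union runs over all words of a given length, $F_{\alpha+1}=\bigcup_{j=1}^N\Phi_j(F_\alpha)$ (the last letter of a word becoming the outermost map). Because each $\Phi_j$ maps $\mathbb{Z}_p$ into $\mathbb{Z}_p$, one has $F_1\subseteq F_0$ and hence, by induction, $F_0\supseteq F_1\supseteq F_2\supseteq\cdots$; moreover each $F_\alpha$ is a finite union of continuous images of the compact set $\mathbb{Z}_p$ and is therefore nonempty and compact. Writing $A:=\bigcap_{\alpha=0}^\infty F_\alpha$, I would show that $A$ is a nonempty compact set satisfying $A=\bigcup_{j=1}^N\Phi_j(A)$. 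Nonemptiness and compactness of $A$ follow from the finite intersection property for the nested nonempty compacta $F_\alpha$. For the fixed--point equation I would use that a continuous map commutes with a decreasing intersection of compacta, namely $\Phi_j\big(\bigcap_\alpha F_\alpha\big)=\bigcap_\alpha\Phi_j(F_\alpha)$, together with the elementary fact that a \emph{finite} union commutes with a decreasing intersection; this yields $\bigcup_j\Phi_j(A)=\bigcup_j\bigcap_\alpha\Phi_j(F_\alpha)=\bigcap_\alpha\bigcup_j\Phi_j(F_\alpha)=\bigcap_\alpha F_{\alpha+1}=A$. By the uniqueness clause of Theorem \ref{Hutchinson}, it then follows that $A=\mathcal{S}_p$, which is the second formula in (ii).

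Next, the first formula in (ii) records the construction directly: the generators $G_1,\dots,G_K$ are the convex components deleted at the first stage, and applying the maps $\Phi_w$ with $w\in W_\alpha$ produces precisely the intervals deleted at stage $\alpha+1$, so that $\mathcal{L}_p=\bigcup_{\alpha\ge0}\bigcup_{w\in W_\alpha}\bigcup_{k=1}^K\Phi_w(G_k)$ (the empty word $\alpha=0$ contributing the $G_k$ themselves). To prove (i), I would apply $\Phi_w$ to the disjoint first--generation decomposition (\ref{first generation}) to obtain, for each $w\in W_\alpha$, the relation $\Phi_w(\mathbb{Z}_p)=\bigcup_{j}\Phi_{w}(\Phi_j(\mathbb{Z}_p))\cup\bigcup_{k}\Phi_w(G_k)$, and then sum over $w\in W_\alpha$. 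Since $\Phi_w\circ\Phi_j$ ranges over $\{\Phi_{w'}:w'\in W_{\alpha+1}\}$ as $(w,j)$ ranges over $W_\alpha\times\{1,\dots,N\}$, this gives $F_\alpha=F_{\alpha+1}\cup\bigcup_{w\in W_\alpha}\bigcup_k\Phi_w(G_k)$. The key point is that this union is disjoint: the level--$\alpha$ cylinders $\Phi_w(\mathbb{Z}_p)$ ($w\in W_\alpha$) are pairwise disjoint, which I would establish by induction on $\alpha$ using the non-overlapping hypothesis $\Phi_j(\mathbb{Z}_p)\cap\Phi_l(\mathbb{Z}_p)=\emptyset$ ($j\ne l$), the injectivity of each $\Phi_w$, and the ultrametric ``disjoint or nested'' dichotomy for $p$-adic balls; combined with the disjointness of the $G_k$ from the $\Phi_j(\mathbb{Z}_p)$ in (\ref{first generation}), this shows $\bigcup_{w\in W_\alpha}\bigcup_k\Phi_w(G_k)$ is disjoint from $F_{\alpha+1}$. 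Hence $F_\alpha\setminus F_{\alpha+1}=\bigcup_{w\in W_\alpha}\bigcup_k\Phi_w(G_k)$, and telescoping the nested sequence gives $\mathbb{Z}_p\setminus\mathcal{S}_p=F_0\setminus\bigcap_\alpha F_\alpha=\bigcup_{\alpha\ge0}(F_\alpha\setminus F_{\alpha+1})=\mathcal{L}_p$, which is statement (i).

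The main obstacle I anticipate is the disjointness bookkeeping in the last step: one must verify not merely that the pieces deleted within a single cylinder $\Phi_w(\mathbb{Z}_p)$ are separated, but that the gap-pieces $\Phi_w(G_k)$ at level $\alpha$ meet no cylinder $\Phi_{w'}(\mathbb{Z}_p)$ at level $\alpha+1$ whatsoever. This is exactly where the hypotheses $\sum_j r_j<1$ and $\Phi_j(\mathbb{Z}_p)\cap\Phi_l(\mathbb{Z}_p)=\emptyset$, propagated through the injective maps $\Phi_w$ and the ultrametric structure, are indispensable; the remaining manipulations (commuting a single contraction with a nested intersection, and a finite union with a decreasing intersection) are routine once the compactness of $\mathbb{Z}_p$ and the finiteness of $N$ and $K$ are invoked.
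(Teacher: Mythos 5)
Your argument is correct: the identification of $\bigcap_\alpha F_\alpha$ with $\mathcal S_p$ via the uniqueness clause of Theorem \ref{Hutchinson}, the disjointness bookkeeping propagated through the injective maps $\Phi_w$, and the telescoping identity $F_0\setminus\bigcap_\alpha F_\alpha=\bigcup_{\alpha\ge 0}(F_\alpha\setminus F_{\alpha+1})$ all go through as you describe. The paper itself gives no proof of this theorem --- it is quoted from \cite{LapLu2} --- so there is nothing to compare against, but your proof is the standard self-contained argument one would expect for this statement.
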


\begin{figure}[h]
\psfrag{dots}{$\vdots$}
\psfrag{Z3}{$\mathbb Z_3$}
\psfrag{Z0}{$0+3\mathbb Z_3$}
\psfrag{Z1}{$1+3\mathbb Z_3=G$}
\psfrag{Z2}{$2+3\mathbb Z_3$}
\psfrag{1}{$0+9\mathbb Z_3$}
\psfrag{2}{$\Phi_1(G)$}
\psfrag{3}{$6+9\mathbb Z_3$}
\psfrag{4}{$2+9\mathbb Z_3$}
\psfrag{5}{$\Phi_2(G)$}
\psfrag{6}{$8+9\mathbb Z_3$}
\raisebox{-1cm}
{\psfig{figure=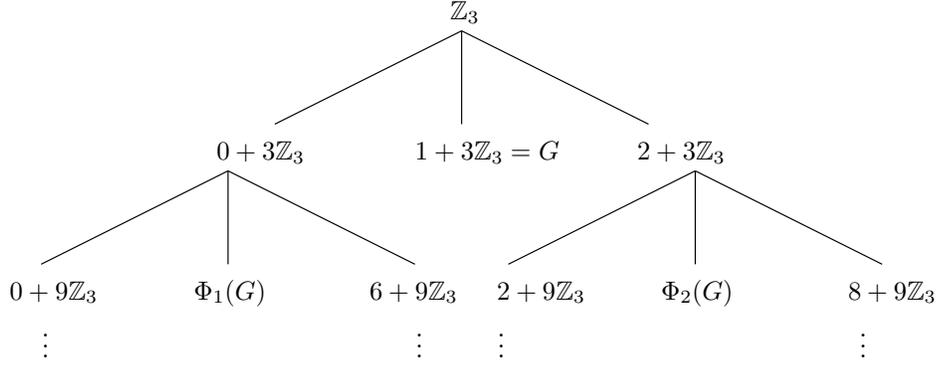, height=4.9cm}}
\caption{Construction of the 3-adic Cantor string $\mathcal{CS}_3$ via an IFS.}
\end{figure}

\begin{example}[Nonarchimedean Cantor string as a 3-adic self-similar string]\label{cantor}
In this example, we review the construction of the nonarchimedean Cantor string 
$\mathcal{CS}_3$, as introduced in \cite{LapLu1} and revisited in \cite{LapLu2}. Our main point here is to stress the fact that $\mathcal{CS}_3$ is a special case of a $p$-adic self-similar string, as constructed just above, and to prepare the reader for more general results about nonarchimedean self-similar strings, as obtained in the rest of this paper. 

Let $\Phi_1, \Phi_2: \mathbb Z_3\longrightarrow \mathbb Z_3$ be the two affine similarity contraction mappings of $\mathbb {Z}_3$ given by 
\begin{equation}\label{maps}
\Phi_1(x)=3x \quad \mbox{and}\quad \Phi_2(x)=2 + 3x,
\end{equation}
with the same scaling ratio $r=3^{-1}$ (i.e., $r_1=r_2=3^{-1}$).
By analogy with the construction of the real Cantor string, subdivide the interval $\mathbb{Z}_3$ into subintervals 
\[
\Phi_1(\mathbb{Z}_3)=0+3\mathbb Z_3 \quad \mbox{and} \quad
 \Phi_2(\mathbb{Z}_3)=2+3\mathbb Z_3.\]
The remaining (3-adic) convex component
\[
\mathbb Z_3 \backslash \bigcup_{j=1}^2 \Phi_j(\mathbb Z_3)=1+3\mathbb Z_3=G
\]
is the first substring of a  $3$-adic self-similar string, called the \emph{nonarchimedean} 
 \emph{Cantor string} and denoted by $\mathcal{CS}_3$ \cite{LapLu1}. 
The length of $G$ is
 $l_1=\mu_H(1+3\mathbb{Z}_p)=3^{-1}.$
By repeating this process with the remaining subintervals 
$\Phi_j(\mathbb{Z}_3),~ \mbox{for} ~ j=1,2$, and continuing on, ad infinitum, we eventually obtain a sequence 
$\mathcal{CS}_3 =l_1, l_2, l_3, \dots,$
associated with the open set resulting from this construction and consisting of intervals of lengths $l_v=3^{-v}$ with multiplicities $m_v=2^{v-1}$, for $v\in \mathbb N^*$. 
As follows from this construction (see Figure 3 and Equation (\ref{maps}), along with part (ii) of Theorem \ref{complementary}), the nonarchimedean Cantor string  $\mathcal{CS}_3$ can also be written as 
\begin{equation}\label{CS_3}
\mathcal{CS}_3=(1+3\mathbb{Z}_3) \cup  (3+9\mathbb{Z}_3) \cup  (5+9\mathbb{Z}_3) \cup \cdots.
\end{equation}

We refer the interested reader to \cite{LapLu1} and \cite{LapLu4} for additional information concerning the nonarchimedean Cantor string $\mathcal{CS}_3$ and the associated nonarchimedean Cantor set $\mathcal C_3$. 
We just mention here that in light of part (i) of Theorem \ref{complementary}, we can recover the 3-adic Cantor set $\mathcal C_3$ as the complement of the 3-adic Cantor string $\mathcal {CS}_3$ in the unit interval (and vice-versa):
\begin{equation}
\mathcal{CS}_3=\mathbb Z_3 \backslash \mathcal C_3, \quad \mbox{and so} \quad 
\mathcal C_3=\mathbb Z_3 \backslash \mathcal{CS}_3.
\end{equation}
Indeed, according to Theorem \ref{Hutchinson}, $\mathcal C_3$ is the self-similar set associated with the self-similar system  $\mathbf{\Phi}=\{\Phi_1, \Phi_2\}$.
\end{example}
 
\section{Geometric Zeta Function of  $p$-Adic Self-Similar  Strings}\label{gzf sss}
  In this section, as well as in \S\ref{PeriodicityR} and \S\ref{zeros and poles}, we will survey results obtained in \cite{LapLu2} about the geometric zeta functions and the complex dimensions of $p$-adic self-similar strings. (See also \cite{LapLu4}, where the archimedean and nonarchimedean situations are contrasted.)
  
  In the next theorem, we provide a first expression for the geometric zeta function of a nonarchimedean self-similar string. At first sight, this expression is almost identical to the one obtained in the archimedean case in \cite[Thm.~2.4]{L-vF2}. 
  Later on, however, we will see that unlike in the archimedean case where the situation is considerably more subtle and complicated (cf.~\cite[Thms.~2.17 and 3.6]{L-vF2}), this expression can be significantly  simplified since the two potentially transcendental functions appearing in the denominator and numerator of Equation (\ref{sszf}) below can always be made rational; see Theorem 
  \ref{rationality} in \S\ref{PeriodicityR}.

\begin{theorem}\label{geometric zeta function}
Let $\mathcal{L}_p$ be a  $p$-adic self-similar  string with scaling ratios $\{r_j\}_{j=1}^N$ and gaps $\{g_k\}_{k=1}^K$, as in the above construction. 
Then the geometric zeta function of $\mathcal{L}_p$
has a meromorphic extension to the whole complex plane $\mathbb{C}$ and is given by
\begin{equation}\label{sszf}
\zeta_{\mathcal{L}_p}(s)=
\frac
{\sum_{k =1}^{K} g_{k}^s}
{1-\sum_{j=1}^N r_j ^s}, 
\quad  \mbox{for} \quad  s\in \mathbb{C}.
\end{equation}
\end{theorem}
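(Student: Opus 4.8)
The plan is to compute the geometric zeta function directly from the explicit description of the lengths of $\mathcal{L}_p$ furnished by the self-similar construction, and then to recognize the resulting multiple series as a product of geometric series. Recall from Equation (\ref{length}) that every interval of $\mathcal{L}_p$ has length of the form $r_1^{e_1}\cdots r_N^{e_N}\,g_k$, where $k\in\{1,\dots,K\}$ and $(e_1,\dots,e_N)\in\mathbb{N}^N$ ranges over all multi-indices (the case $e_1=\cdots=e_N=0$ being allowed, as these are precisely the first-generation gaps $G_k$ themselves). First I would substitute this parametrization of the lengths into the defining Dirichlet series of Definition \ref{zetaLp}. The key point, which must be justified, is that the map sending a word $w\in W_\alpha$ together with a gap index $k$ to the interval $\Phi_w(G_k)$ is a \emph{bijection} onto the convex components of $\mathcal{L}_p$; this is exactly the content of part (ii) of Theorem \ref{complementary}, so the sum over $j$ in (\ref{zeta}) may be reindexed, without repetition or omission, as a sum over $(e_1,\dots,e_N,k)$.

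Carrying out this reindexing, for $\Re(s)$ large enough to guarantee absolute convergence, I would write
\begin{equation*}
\zeta_{\mathcal{L}_p}(s)=\sum_{k=1}^{K}\ \sum_{(e_1,\dots,e_N)\in\mathbb{N}^N}
\bigl(r_1^{e_1}\cdots r_N^{e_N}g_k\bigr)^s
=\Bigl(\sum_{k=1}^{K}g_k^{\,s}\Bigr)\,
\prod_{j=1}^{N}\sum_{e_j=0}^{\infty}\bigl(r_j^{\,s}\bigr)^{e_j}.
\end{equation*}
Since absolute convergence lets me factor the multiple sum into a product of $N$ independent geometric series, and since $0<r_j<1$ forces $r_j^{\,s}$ to have modulus less than $1$ for $\Re(s)>0$, each geometric series sums to $(1-r_j^{\,s})^{-1}$. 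This does not yet give the claimed closed form, however: the product $\prod_{j=1}^N(1-r_j^{\,s})^{-1}$ is not the same as $(1-\sum_{j=1}^N r_j^{\,s})^{-1}$. The correct bookkeeping is that the multi-index $(e_1,\dots,e_N)$ should record the full history $(\nu_1,\dots,\nu_q)$ of the word, so that each fixed total length $\alpha=q$ contributes $(\sum_{j=1}^N r_j^{\,s})^{\alpha}$; summing the single geometric series in the variable $\sum_j r_j^{\,s}$ over $\alpha\geq 0$ yields the factor $\bigl(1-\sum_{j=1}^N r_j^{\,s}\bigr)^{-1}$, giving precisely Equation (\ref{sszf}) for $\Re(s)$ large.

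The main obstacle is therefore not the algebra of geometric series but the combinatorial indexing: one must sum over \emph{ordered words} $w\in W_\alpha$ (reflecting the composition order $\Phi_w=\Phi_{w_\alpha}\circ\cdots\circ\Phi_{w_1}$), not over unordered exponent vectors, so that a word of length $\alpha$ contributes $r_{w_1}^{\,s}\cdots r_{w_\alpha}^{\,s}$ and the sum over all words of length $\alpha$ collapses to $(\sum_{j=1}^N r_j^{\,s})^\alpha$. I would be careful to verify that the generating images $\Phi_w(G_k)$ are pairwise disjoint and genuinely distinct as $w$ and $k$ vary, which follows from the nonoverlapping hypothesis $\Phi_j(\mathbb{Z}_p)\cap\Phi_l(\mathbb{Z}_p)=\emptyset$ for $j\neq l$ together with the contraction property; this is what makes the reindexing a bijection. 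Finally, having established (\ref{sszf}) on a right half-plane where $\bigl|\sum_j r_j^{\,s}\bigr|<1$, I would invoke the fact that the right-hand side is a ratio of two entire functions of $s$ (indeed, entire functions of the variables $g_k^{\,s}$ and $r_j^{\,s}$, which are themselves entire in $s$) whose denominator $1-\sum_{j=1}^N r_j^{\,s}$ is not identically zero; this exhibits a meromorphic continuation of $\zeta_{\mathcal{L}_p}$ to all of $\mathbb{C}$ by the principle of analytic (here, meromorphic) continuation, completing the proof.
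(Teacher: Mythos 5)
The paper itself offers no proof of this theorem: it is quoted as a survey result from \cite{LapLu2}, so there is no internal argument to compare against. Your derivation is the standard one and, in its final form, is correct: reindex the convex components of $\mathcal{L}_p$ bijectively by pairs $(w,k)$ with $w\in W_\alpha$ a word and $k$ a gap index (Theorem \ref{complementary}(ii) and the nonoverlap hypothesis guarantee this), note that the components indexed by words of length $\alpha$ contribute $\bigl(\sum_{j=1}^N r_j^s\bigr)^\alpha\sum_{k=1}^K g_k^s$, sum the geometric series in $\sum_j r_j^s$ on a half-plane where its modulus is less than $1$, and continue meromorphically via the ratio of entire functions. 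One caution: your displayed equation is literally false as written, since the reindexing by unordered exponent vectors $(e_1,\dots,e_N)$ drops the multinomial multiplicity $\binom{e_1+\cdots+e_N}{e_1,\dots,e_N}$ with which each length $r_1^{e_1}\cdots r_N^{e_N}g_k$ actually occurs (which is exactly why the product $\prod_j(1-r_j^s)^{-1}$ appears instead of the correct $(1-\sum_j r_j^s)^{-1}$). You do identify and repair this in the subsequent prose, but in a final write-up the sum should be taken over ordered words from the start rather than stated as an equality, then retracted.
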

\begin{corollary}
The set of complex dimensions of a $p$-adic  self-similar fractal  string $\mathcal{L}_p$ is contained in the set of complex solutions $\omega$ of the Moran equation $\sum_{j=1}^N r_j ^{\omega}=1.$ If the string has a single generator {\rm(}i.e., if $K=1${\rm)}, then this inclusion is an equality.\footnote
{See, e.g.,  Examples \ref{cantor}, \ref{fibonacci} and Theorem \ref{rationality}.}
\end{corollary}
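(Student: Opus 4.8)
The plan is to read off the complex dimensions directly from the explicit formula for the geometric zeta function provided by Theorem \ref{geometric zeta function}. Since
\[
\zeta_{\mathcal{L}_p}(s)=\frac{\sum_{k=1}^{K} g_k^s}{1-\sum_{j=1}^N r_j^s},
\]
this is a meromorphic function on all of $\mathbb{C}$, and its poles (which by Definition \ref{dvcd} constitute the complex dimensions $\mathcal{D}_{\mathcal{L}_p}$) can occur only where the denominator vanishes, i.e.\ at the complex solutions $\omega$ of the Moran equation $\sum_{j=1}^N r_j^{\omega}=1$. First I would argue the inclusion $\mathcal{D}_{\mathcal{L}_p}\subseteq\{\omega: \sum_{j=1}^N r_j^{\omega}=1\}$: if $\omega$ is a pole of $\zeta_{\mathcal{L}_p}$, then since the numerator $\sum_{k=1}^K g_k^s$ is an entire function (a finite sum of entire exponentials $g_k^s=e^{s\log g_k}$), the only source of a singularity is the vanishing of the denominator $1-\sum_{j=1}^N r_j^s$. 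Hence every complex dimension satisfies the Moran equation.

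For the converse in the single-generator case $K=1$, I would show that no pole is cancelled by a zero of the numerator. When $K=1$ the numerator reduces to the single term $g_1^s=e^{s\log g_1}$, which is a nowhere-vanishing entire function (the exponential never equals zero). Therefore at any root $\omega$ of the denominator, the numerator is nonzero, so $\omega$ is genuinely a pole of $\zeta_{\mathcal{L}_p}$ and hence a complex dimension. This gives the reverse inclusion $\{\omega: \sum_{j=1}^N r_j^{\omega}=1\}\subseteq\mathcal{D}_{\mathcal{L}_p}$, and combined with the first part yields equality.

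I expect the main subtlety — though it is genuinely mild here — to be the single-generator converse, specifically the observation that the potential cancellation between numerator and denominator (which is exactly what complicates the archimedean theory, cf.\ the reference to \cite[Thms.~2.17 and 3.6]{L-vF2} in the preceding discussion) simply cannot happen when $K=1$ because $g_1^s$ has no zeros. For general $K\geq 2$ one only gets an inclusion, since a root of the denominator could in principle coincide with a zero of $\sum_{k=1}^K g_k^s$, and such a coincidence would remove that point from the set of actual poles; this is precisely why the statement claims only containment in the general case. No further machinery beyond Theorem \ref{geometric zeta function} and the elementary fact that a finite exponential sum is entire (and a single exponential is zero-free) is needed, so the proof is short.
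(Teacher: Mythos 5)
Your proof is correct and follows exactly the route the paper intends: the corollary is presented as an immediate consequence of the explicit formula in Theorem \ref{geometric zeta function}, with the poles confined to the zeros of the denominator $1-\sum_{j=1}^N r_j^s$, and the single-generator case settled by noting that the numerator $g_1^s=e^{s\log g_1}$ is zero-free so no cancellation can occur. Nothing further is needed.
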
 

\begin{definition}\label{latticenonlattice} 
A $p$-adic  self-similar  string $\mathcal{L}_p$ is said to be \emph{lattice}  if the multiplicative group generated by the  scaling ratios 
$r_1, r_2, \ldots, r_N$ is discrete  in $(0,\infty)$. 
Otherwise, $\mathcal L_p$ is said to be \emph{nonlattice}. Furthermore, $\mathcal L_p$ is said to be \emph{strongly lattice} if  the multiplicative  group generated by $\{r_1, \ldots, r_N, g_1, \dots, g_K\}$ is discrete in  $(0,\infty)$. Naturally, a strongly lattice string is also a lattice string. 
\end{definition}

 \begin{theorem}\label{lattice}
Every  $p$-adic self-similar fractal string is strongly lattice.  
\end{theorem}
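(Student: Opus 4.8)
The plan is to reduce everything to a single elementary observation: in $\mathbb Q_p$, the Haar measure of any ball is an integer power of $p$. Once this is in hand, I will show that all of the numbers $r_1,\dots,r_N,g_1,\dots,g_K$ lie in the discrete multiplicative group $p^{\mathbb Z}=\{p^m:m\in\mathbb Z\}$, so that the group they generate is a subgroup of $p^{\mathbb Z}$ and hence discrete in $(0,\infty)$, which is exactly the condition defining a strongly lattice string (Definition~\ref{latticenonlattice}).

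First I would dispatch the scaling ratios. By Lemma~\ref{scaling}, each $\Phi_j$ has the form $\Phi_j(x)=a_jx+b_j$ with $a_j\in p\mathbb Z_p$, and its scaling ratio equals $r_j=|a_j|_p=p^{-n_j}$ for some $n_j\in\mathbb N^{*}$. Thus every $r_j$ is already an integer power of $p$, with no further work required.

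The substance of the argument is to treat the gap lengths $g_k$. Here I would appeal to the geometric construction: by definition, each generator $G_k$ is one of the convex components of $\mathbb Z_p\setminus\bigcup_{j=1}^N\Phi_j(\mathbb Z_p)$, and by Remark~\ref{convex component} every convex component of an open subset of $\mathbb Q_p$ is itself a ball. Writing $G_k=b_k+p^{m_k}\mathbb Z_p$ (with $m_k\in\mathbb N^{*}$, since $G_k\subsetneq\mathbb Z_p$), the normalization $\mu_H(a+p^m\mathbb Z_p)=p^{-m}$ gives $g_k=\mu_H(G_k)=p^{-m_k}$. Hence each $g_k$ is also an integer power of $p$.

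It then remains only to assemble the pieces. Since $r_j=p^{-n_j}$ and $g_k=p^{-m_k}$ all belong to $p^{\mathbb Z}$, the multiplicative subgroup of $(0,\infty)$ generated by $\{r_1,\dots,r_N,g_1,\dots,g_K\}$ is contained in $p^{\mathbb Z}$; being a subgroup of the discrete group $p^{\mathbb Z}$, it is itself discrete. By Definition~\ref{latticenonlattice}, this means precisely that $\mathcal L_p$ is strongly lattice. I do not expect a genuine obstacle: the only point requiring care is verifying that each generator $G_k$ is a single ball rather than a more complicated set, which is exactly what the notion of convex component (Remark~\ref{convex component}) guarantees. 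This commensurability of all lengths through powers of $p$ is also the feature that sharply distinguishes the nonarchimedean case from the archimedean one, where gap lengths need not be commensurable with the scaling ratios and the strongly lattice (indeed, even the lattice) property can fail.
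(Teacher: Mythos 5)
Your proof is correct and follows essentially the same route as the paper: Remark~\ref{remark 4.11} and \S\ref{PeriodicityR} derive the theorem precisely from the facts that $r_j=p^{-n_j}$ (Lemma~\ref{scaling}) and $g_k=\mu_H(G_k)=p^{-m_k}$, so that the group generated by all these ratios lies in the discrete group $p^{\mathbb Z}$. Your added justification that each $G_k$ is a single ball (via the convex-component structure), so its Haar measure is an integer power of $p$, is exactly the point the paper leaves implicit.
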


 \begin{remark}\label{remark 4.11} 
 Theorem \ref{lattice} follows from the fact that all the scaling ratios $r_j$ and  the gaps $g_k$ must  belong to  the group $p^{ \mathbb Z },$ as will be discussed below in more detail in \S\ref{PeriodicityR}. It follows that  $p$-adic self-similar  strings are lattice strings in a very strong sense, namely, their geometric zeta functions are \emph{rational functions} of a suitable variable $z$ (see Theorem \ref{rationality} below).  \end{remark}

\begin{remark}Theorem \ref{lattice} is in sharp contrast with the usual theory of real 
self-similar strings developed in \cite[Chs.~2 and 3]{L-vF2}.  Indeed,  there are both lattice and nonlattice strings  in the archimedean case. Furthermore, generically, archimedean self-similar strings are nonlattice. Moreover, it is shown in \cite[Ch.~3]{L-vF2} by using Diophantine approximation that every nonlattice string in $\mathbb R=\mathbb Q_{\infty}$ can be approximated by a sequence of lattice strings with oscillatory periods increasing to infinity. It follows that the complex dimensions of an archimedean nonlattice string are quasiperiodically distributed (in a very precise sense, that is explained in \emph{loc.~cit.}) because the complex dimensions of archimedean lattice strings are periodically distributed along finitely many vertical lines. Clearly, there is nothing of this kind in the nonarchimedean case since $p$-adic self-similar strings are necessarily lattice. 
\end{remark}

\section{Rationality of the Geometric Zeta Function}\label{PeriodicityR}
  In this section, we show that the geometric zeta function of a $p$-adic self-similar string is always rational (after an appropriate change of variable). 
  It will follow (see Theorem \ref{periodicity}) that \emph{not only the poles} (i.e., the complex dimensions of $\mathcal{L}_p$) \emph{but also the zeros of $\zeta_{\mathcal{L}_p}$} \emph{are periodically distributed.} 

We introduce some necessary notation. First, by Lemma \ref{scaling}, we can write
\begin{equation*}\label{r}
r_j=p^{-n_j}, \quad \mbox{with} \quad n_j\in \mathbb N^* \quad  \mbox{for} \quad j=1, 2, \ldots, N.
\end{equation*}
Second, we write 
\begin{equation*}
g_k=\mu_H(G_k)=p^{-m_k}, \quad \mbox{with} \quad m_k\in \mathbb N^* \quad \mbox{for} \quad k=1, 2, \ldots, K.
\end{equation*}
Third, let 
\begin{equation*}
d= \mbox{gcd}\{n_1, \ldots, n_N, m_1, \ldots, m_K\}.
\end{equation*}
Then there exist positive integers $n_j' ~\mbox{and}~ m_k'$ such that
\begin{equation}\label{n'}
n_j=dn_j'  \quad \mbox{and} \quad m_k=dm_k' \quad \mbox{for}  \quad j=1, \ldots, N  \quad \mbox{and} \quad  k=1,\ldots, K.
\end{equation}
Finally, we set\footnote
{Note that by construction,
 $r_j=r^{n_j'}$ and $g_k=r^{m_k'}$ for $j=1, \dots, N$ and $k=1,\dots, K.$ Hence, $r=p^{-d}$ is the multiplicative generator in $(0,1)$ of the rank one group generated by $\{r_1, \ldots, r_N, g_1, \dots, g_K\}$ (or,
  equivalently, by either
 $\{r_1, \ldots, r_N\}$ or $\{g_1, \ldots, g_K\}$).}
 
\begin{equation}\label{Q}
p^d=1/r.
\end{equation}
Without loss of generality, we may assume that the scaling ratios $r_j$ and  the gaps $g_k$ are written in nonincreasing order as in Equations (\ref{ratios}) and (\ref{gaps}), respectively; so that 
\begin{equation}
0<n_1'\leq n_2' \leq \cdots \leq n_N'  \quad \mbox{and} \quad 0<m_1'\leq m_2' \leq \cdots \leq m_K'. 
\end{equation}

\begin{theorem}\label{rationality}
Let $\mathcal{L}_p$ be a $p$-adic self-similar  string and $z=r^s,$ with $r=p^{-d}$ as in Equation 
{\rm(}\ref{Q}{\rm)}.
Then the geometric zeta function
 $\zeta_{\mathcal{L}_p}$ of $\mathcal{L}_p$ is a rational function in $z$. Specifically, 
 \begin{equation}\label{rational function}
\zeta_{\mathcal{L}_p}(s)=
\frac
{\sum_{k =1}^{K} z^{m_{k}'}}
{1-\sum_{j=1}^N z ^{n_j'}},
\end{equation}
where $m_k', n_j'\in \mathbb N^*$ are given by Equation {\rm(}\ref{n'}{\rm)}.
\end{theorem}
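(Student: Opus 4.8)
The plan is to obtain the rational form by a direct substitution into the closed expression for $\zeta_{\mathcal{L}_p}$ already furnished by Theorem \ref{geometric zeta function}. All of the substantive work has in fact been done in that theorem and in Theorem \ref{lattice}, so what remains is essentially book-keeping with the exponents.

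First I would recall that, by Theorem \ref{geometric zeta function}, for every $s\in\mathbb{C}$,
\[
\zeta_{\mathcal{L}_p}(s)=\frac{\sum_{k=1}^{K} g_k^{\,s}}{1-\sum_{j=1}^{N} r_j^{\,s}}.
\]
Next, using Lemma \ref{scaling} and the normalizations introduced just above the statement, I would write each scaling ratio and each gap as an integer power of $p$: $r_j=p^{-n_j}$ and $g_k=p^{-m_k}$ with $n_j,m_k\in\mathbb{N}^*$. By Equation (\ref{n'}) we have $n_j=d\,n_j'$ and $m_k=d\,m_k'$ with $n_j',m_k'\in\mathbb{N}^*$, while by Equation (\ref{Q}) we have $p^{-d}=r$. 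Consequently $r_j=r^{n_j'}$ and $g_k=r^{m_k'}$.

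Now I set $z=r^s$, so that $r_j^{\,s}=(r^{n_j'})^s=(r^s)^{n_j'}=z^{n_j'}$ and likewise $g_k^{\,s}=z^{m_k'}$. Substituting these identities into the displayed expression for $\zeta_{\mathcal{L}_p}$ gives
\[
\zeta_{\mathcal{L}_p}(s)=\frac{\sum_{k=1}^{K} z^{m_k'}}{1-\sum_{j=1}^{N} z^{n_j'}},
\]
which is exactly Equation (\ref{rational function}). Since every $n_j'$ and $m_k'$ is a positive integer, the numerator and the denominator are honest polynomials in $z$ — the denominator having constant term $1$ and hence not vanishing identically — so $\zeta_{\mathcal{L}_p}$ is a rational function of $z$, as claimed.

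I do not expect any genuine obstacle; the only point meriting a remark is the meaning of the change of variable $z=r^s$. Since $s\mapsto r^s=e^{s\log r}$ is entire, the substitution is legitimate as an identity between meromorphic functions on all of $\mathbb{C}$, and the rationality asserted is precisely that $\zeta_{\mathcal{L}_p}$ equals a fixed rational function evaluated at $z=r^s$. Choosing $d$ to be the greatest common divisor of the $n_j$ and $m_k$ guarantees that the exponents $n_j',m_k'$ are jointly coprime, which is what makes this the primitive such representation and what will be exploited for the periodicity of the poles and zeros in Theorem \ref{periodicity}; for the rationality statement alone, however, any common divisor of the $n_j$ and $m_k$ would suffice.
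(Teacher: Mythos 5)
Your proof is correct and follows essentially the same route as the paper: the paper derives Equation (\ref{rational function}) by substituting $r_j=r^{n_j'}$ and $g_k=r^{m_k'}$ (as noted in its footnote on the multiplicative generator) into the closed form (\ref{sszf}) of Theorem \ref{geometric zeta function}, exactly as you do. Your closing remarks on the legitimacy of the change of variable and on the role of taking $d$ to be the greatest common divisor are accurate and consistent with how the paper uses this result in Theorem \ref{periodicity}.
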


 \begin{definition}\label{oscillatoryperiod} 
 Let $\mathbf{p}=\frac{2\pi}{d\log p}$. Then $\mathbf{p}$ is called the \emph{oscillatory period} of $\mathcal{L}_p.$ \end{definition}

\subsection{Periodicity of the Poles and the Zeros of $\zeta_{\mathcal L_p}$}\label{zeros and poles}

The following result (also from \cite{LapLu2}) is the nonarchimedean counterpart of \cite[Thms.~2.17 and 3.6]{L-vF2}, which provide the rather subtle structure of the complex dimensions of archimedean self-similar strings. It is significantly simpler, however, due to the fact that nonlattice $p$-adic self-similar strings do not exist.

To avoid any confusion, we stress that in the statement of the next theorem, $\zeta_{\mathcal L_p}$ is viewed as a function of the original complex variable $s$. Moreover,  as was recalled in Remark \ref{3.7}, 
it follows from a theorem in \cite{LapLu3}  that 
 the dimension of $\mathcal L_p$ defined  as the \emph{Minkowski dimension} $D=D_{\mathcal L_p}$ coincides with  the \emph{abscissa of convergence} of the Dirichlet series originally defining $\zeta_{\mathcal L_p}$ and denoted (as in Equation (\ref{sigma})) by $\sigma=\sigma_{\mathcal L_p} $. Furthermore,  
  let $\delta$ be the \emph{similarity dimension} of $\mathcal L_p$, i.e., the unique \emph{real} (and hence, positive) solution of the Moran equation $\sum_{j=1}^Nr_j^s=1$; then $\delta=D$ by part (iii) of Theorem \ref{periodicity} below. Therefore, in the present case of $p$-adic self-similar  strings, there is no need to distinguish between these various notions of `fractal dimensions'.

\begin{theorem}[Structure of the complex dimensions]\label{periodicity} 
Let $\mathcal{L}_p$ be a nontrivial $p$-adic self-similar   string. Then \\
\indent{\rm(}i{\rm)} The complex dimensions of $\mathcal{L}_p$ and the zeros of $\zeta_{\mathcal{L}_p}$ are periodically distributed along finitely many  vertical lines, with period $\mathbf p$, the oscillatory period of $\mathcal{L}_p$ {\rm(}as given in Definition \ref{oscillatoryperiod}{\rm)}. \\
\indent{\rm(}ii{\rm)}  Furthermore, along a given vertical line, each pole {\rm(}respectively, each zero{\rm)} of $\zeta_{\mathcal{L}_p}$ has the same multiplicity. \\
\indent{\rm(}iii{\rm)}  Finally,
 the dimension $D$ of $\mathcal{L}_p$ is the only complex dimension that is located on the real axis.
Moreover,
 $D$ is a simple pole of $\zeta_{\mathcal{L}_p}$
and is located on the right most vertical line.
That is,
$D$ is equal to the maximum of the real parts of the complex dimensions.  
\end{theorem}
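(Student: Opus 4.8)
\textbf{Proof proposal.} The plan is to reduce everything to the rational structure of $\zeta_{\mathcal{L}_p}$ in the variable $z = r^s = p^{-ds}$ provided by Theorem \ref{rationality}, and then transport statements about roots of polynomials back to the $s$-plane through the exponential change of variables. Write $\zeta_{\mathcal{L}_p}(s) = P(z)/Q(z)$ with $P(z) = \sum_{k=1}^K z^{m_k'}$ and $Q(z) = 1 - \sum_{j=1}^N z^{n_j'}$. The complex dimensions are among the zeros of $Q$, and the zeros of $\zeta_{\mathcal{L}_p}$ are among the zeros of $P$ (after cancelling any common factors of $P$ and $Q$); in each case only finitely many values of $z$ occur.

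First I would establish parts (i) and (ii). The key observation is that the map $s \mapsto z = p^{-ds}$ is $i\mathbf p$-periodic, where $\mathbf p = 2\pi/(d\log p)$ as in Definition \ref{oscillatoryperiod}: indeed $p^{-ds} = p^{-ds'}$ if and only if $s - s' \in i\mathbf p \,\mathbb Z$. Consequently, for each fixed $z_0 \ne 0$ the fibre $\{s : p^{-ds} = z_0\}$ is a single vertical line $\{\Re(s) = -\log|z_0|/(d\log p)\}$ on which the points are spaced exactly $\mathbf p$ apart. Since $P$ and $Q$ are polynomials, they have finitely many roots, so the poles and the zeros of $\zeta_{\mathcal{L}_p}$ lie on finitely many such vertical lines and are $\mathbf p$-periodic along each, giving (i). For (ii), note that $dz/ds = -(d\log p)\,z$ never vanishes for finite $s$, so $s \mapsto z$ is a local biholomorphism; hence the order of a pole (respectively, zero) of $\zeta_{\mathcal{L}_p}$ at $s_0$ equals the multiplicity of $z_0 = p^{-ds_0}$ as a root of $Q$ (respectively, of $P$). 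As all points of a given vertical line share the same value $z_0$, they share the same multiplicity.

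Next I would prove part (iii). Recall (Remark \ref{3.7} and the discussion preceding the theorem) that $D$ equals the abscissa of convergence and is the unique real solution of the Moran equation $\sum_{j=1}^N r_j^s = 1$. For the uniqueness on the real axis, I would observe that $\sigma \mapsto \sum_{j=1}^N r_j^\sigma$ is continuous and strictly decreasing on $\mathbb R$ (since each $r_j \in (0,1)$), running from $+\infty$ to $0$; hence $D$ is the only real solution of the Moran equation, and a fortiori the only complex dimension on the real axis. That $D$ is genuinely a pole (and not cancelled) follows because $z_D = r^D \in (0,1)$ is positive, so $P(z_D) = \sum_{k} z_D^{m_k'} > 0$. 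Simplicity is then immediate from $Q'(z_D) = -\sum_{j} n_j' z_D^{n_j'-1} < 0$: thus $z_D$ is a simple root of $Q$, and the local biholomorphism $s \mapsto z$ makes $D$ a simple pole. Finally, for any complex dimension $s$, writing $\sigma = \Re(s)$ and applying the triangle inequality to the Moran equation gives $1 = |\sum_{j} r_j^s| \le \sum_{j} r_j^\sigma$; by the strict monotonicity above this forces $\sigma \le D$, so $D$ is the maximum of the real parts and lies on the rightmost vertical line.

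The only point requiring care is the interplay between numerator and denominator: the complex dimensions form a subset of the roots of $Q$ (they coincide only when $K = 1$, as noted in the corollary following Theorem \ref{geometric zeta function}), so the statements must be phrased for the actual poles of $\zeta_{\mathcal{L}_p}$ rather than for all roots of the Moran equation. This causes no real difficulty, however, since the possible cancellations occur at common roots of $P$ and $Q$, which themselves come in full $\mathbf p$-periodic fibres and so do not disturb the periodicity in (i) and (ii); and at $z_D$ there is no cancellation at all. I expect this bookkeeping — rather than any substantive estimate — to be the main (and only mild) obstacle.
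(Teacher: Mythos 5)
Your proposal is correct and follows essentially the same route the paper intends: it derives everything from the rationality of $\zeta_{\mathcal{L}_p}$ in $z=r^{s}=p^{-ds}$ (Theorem \ref{rationality}), the $i\mathbf{p}$-periodicity of the map $s\mapsto z$, and the strict monotonicity of $\sigma\mapsto\sum_j r_j^{\sigma}$ for part (iii); the paper itself only cites \cite{LapLu2} and the archimedean lattice case of \cite{L-vF2}, but \S\ref{PeriodicityR} makes clear this is exactly the intended argument. Your attention to possible cancellations between $P$ and $Q$ (which occur along entire fibres and hence do not disturb periodicity, and which cannot occur at $z_D=r^{D}>0$) is the right bookkeeping and completes the proof.
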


\begin{remark} 
The situation described above---specifically, the rationality of the zeta function in the variable $z=r^{s}$, with $r=p^{-d}$, and the ensuing periodicity of the poles and the zeros---is analogous to the one encountered for a curve (or more generally, a variety) over a finite field $\mathbb{F}_{p^d}$; see, e.g., Chapter 3 of \cite{ParSh}. In this analogy, the prime number $p$ is the characteristic of the finite field, and the cardinality of the field, $p^d$, corresponds to $r^{-1}$, the reciprocal of the multiplicative generator of $\mathcal L_p$.
\end{remark}

We next supplement the above results by stating a theorem (from \cite{LapLu2}, \cite{LapLu4} and based on corresponding results in \cite[Chs. 2 \& 6]{L-vF2}) which  will be very useful to us in \S\ref{exact tf} in order to simplify the tube formula associated with a $p$-adic self-similar string.\footnote{In light of Theorems \ref{rationality} and \ref{periodicity}, Theorem \ref{residue} follows from a corresponding result in \cite{L-vF2}.} 

According to part (i) of Theorem \ref{periodicity}, there exist finitely many poles 
\[\omega_1,\ldots,  \omega_q,\]
of  $\zeta_{\mathcal{L}_p}$ with $\omega_1=D$ and $\Re(\omega_q)\leq \cdots \leq \Re(\omega_2)<D$,
such that 
\[\mathcal D_{\mathcal L_p}=\{\omega_u+in\mathbf{p} : n\in \mathbb Z, ~u=1, \ldots, q\}.\]
Furthermore, each complex dimension $\omega+in\mathbf{p}$ is simple (by parts (ii) and (iii) of Theorem \ref{periodicity}) and the residue of $\zeta_{\mathcal L_p}(s)$ at $s=\omega+in\mathbf{p}$ is independent of $n\in \mathbb Z$ and, in light of Equation (\ref{rational function}), equal to 
\begin{equation}\label{residue equation}
\res(\zeta_{\mathcal{L}_p}; \omega+in\mathbf{p})=
\lim_{s\to\omega}(s-\omega)
\frac
{\sum_{k =1}^{K} r^{m_{k}'s}}
{1-\sum_{j=1}^Nr^{n_j's}}=
\frac
{\sum_{k =1}^{K} r^{m_{k}'\omega}}
{\log{r^{-1}}\sum_{j=1}^N n_j'r^{n_j'\omega}}.
\end{equation}
In particular, this is the case for $\omega=D$.
See~\cite[Ch. 6]{L-vF2} for the general case.
 
\begin{theorem}\label{residue}
{\rm(}i{\rm)} For each $u=1, \ldots, q,$ the principal part of the Laurent series of $\zeta_{\mathcal L_p}(s)$ at 
$s=\omega_u+in\mathbf{p}$ does not depend on $n\in \mathbb Z$. \\
\indent{\rm(}ii{\rm)}  Moreover, let $u\in \{1, \ldots, q\}$ be such that $\omega_u$ {\rm(}and hence also $\omega_u+in\mathbf{p}$, for every $n\in \mathbb Z$, by part {\rm(}ii{\rm)}  of Theorem \ref{periodicity}{\rm)} is simple. 
Then the residue of $\zeta_{\mathcal L_p}(s)$ at $s=\omega_u+in\mathbf{p}$ is independent of  $n\in \mathbb Z$ and 
\begin{equation}\label{residue equation}
\res(\zeta_{\mathcal{L}_p}; \omega_u+in\mathbf{p})=
\frac
{\sum_{k =1}^{K} r^{m_{k}'\omega_u}}
{\log{r^{-1}}\sum_{j=1}^N n_j'r^{n_j'\omega_u}}.
\end{equation}
In particular, this is the case for $\omega_1=D.$ 
\end{theorem}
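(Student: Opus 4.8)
The plan is to prove Theorem \ref{residue} by directly analyzing the rational expression for $\zeta_{\mathcal{L}_p}$ furnished by Theorem \ref{rationality}, combined with the periodicity structure established in Theorem \ref{periodicity}. The crucial observation is that when we write $z=r^s$, the zeta function $\zeta_{\mathcal{L}_p}(s)=P(z)/Q(z)$, where $P(z)=\sum_{k=1}^K z^{m_k'}$ and $Q(z)=1-\sum_{j=1}^N z^{n_j'}$, is a fixed rational function of $z$ that carries no dependence on the integer $n$. Since the substitution $s\mapsto z=r^s=e^{-ds\log p}$ satisfies $r^{(s+in\mathbf{p})}=r^s$ (because $\mathbf{p}=\frac{2\pi}{d\log p}$ forces $e^{-d(in\mathbf{p})\log p}=e^{-2\pi i n}=1$), the values $z(\omega_u)$ and $z(\omega_u+in\mathbf{p})$ coincide for every $n\in\mathbb{Z}$. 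This periodicity in the $z$-variable is precisely what drives the $n$-independence of all local data.

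First I would prove part (i). By Theorem \ref{periodicity}(ii), each pole along the vertical line through $\omega_u$ has the same multiplicity, say $\ell_u$. The principal part of the Laurent expansion of $\zeta_{\mathcal{L}_p}(s)$ at $s=\omega_u+in\mathbf{p}$ is determined by the coefficients of $(s-\omega_u-in\mathbf{p})^{-j}$ for $1\le j\le \ell_u$. The key step is a change-of-variable argument: writing $\zeta_{\mathcal{L}_p}(s)=F(z)$ with $z=r^s$, and noting that $z$ is a \emph{locally biholomorphic} function of $s$ (since $dz/ds=-d(\log p)\,r^s\neq 0$), the Laurent coefficients of $F(z)$ at the point $z_u:=z(\omega_u)=z(\omega_u+in\mathbf{p})$ transfer to Laurent coefficients of $\zeta_{\mathcal{L}_p}(s)$ in $s$. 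Because the local expansion of $z=r^s$ about $\omega_u+in\mathbf{p}$ has exactly the same Taylor coefficients as its expansion about $\omega_u$ (the map $s\mapsto r^s$ commutes with the shift $s\mapsto s+in\mathbf{p}$, so $r^{(\omega_u+in\mathbf{p})+h}=r^{\omega_u+h}$ for all $h$), the entire principal part is identical for every $n$. I would make this precise by expanding $z-z_u$ as a power series in $h=s-(\omega_u+in\mathbf{p})$ and observing that the coefficients depend only on $z_u$ and on derivatives of $r^s$, all evaluated in a way that is manifestly $n$-free.

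For part (ii), under the additional hypothesis that $\omega_u$ is a simple pole, the residue computation reduces to the standard formula $\res(\zeta_{\mathcal{L}_p};\omega_u+in\mathbf{p})=\lim_{s\to\omega_u+in\mathbf{p}}(s-\omega_u-in\mathbf{p})\,\zeta_{\mathcal{L}_p}(s)$. Writing this through $z=r^s$, the simple pole of $F(z)=P(z)/Q(z)$ occurs at $z_u$ where $Q(z_u)=0$ and $Q'(z_u)\neq 0$, giving residue in $z$ equal to $P(z_u)/Q'(z_u)$. Converting back to the $s$-variable introduces the Jacobian factor $ds/dz=(dz/ds)^{-1}=-1/(d(\log p)\,z_u)$, and assembling these pieces yields exactly the closed form in Equation (\ref{residue equation}): the numerator $\sum_{k=1}^K r^{m_k'\omega_u}$ comes from $P(z_u)$, while the denominator $\log(r^{-1})\sum_{j=1}^N n_j' r^{n_j'\omega_u}$ comes from $-Q'(z_u)$ times the Jacobian (note $\log(r^{-1})=d\log p$). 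This matches the displayed computation preceding the theorem, which already records the limit as $s\to\omega$ of $(s-\omega)\,\zeta_{\mathcal{L}_p}(s)$; the result for general $n$ then follows immediately from the $z$-periodicity, since replacing $\omega_u$ by $\omega_u+in\mathbf{p}$ leaves $z_u$, $P(z_u)$, and $Q'(z_u)$ all unchanged.

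The main obstacle I anticipate is the bookkeeping in part (i) for poles of higher multiplicity $\ell_u>1$, where one must verify that \emph{all} coefficients of the principal part---not merely the residue---are $n$-independent. The clean way around this is to avoid computing individual Laurent coefficients and instead argue structurally: the full principal part of $\zeta_{\mathcal{L}_p}$ at $s=\omega_u+in\mathbf{p}$ is the unique rational-in-$h$ singular part such that $\zeta_{\mathcal{L}_p}(s)$ minus this part is holomorphic near $\omega_u+in\mathbf{p}$, and since $F(z)$ together with the shift-invariant local chart $z=r^s$ produces literally the same germ of singularity at each such point, uniqueness of the principal part forces the desired $n$-independence without explicit formulas. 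The simple-pole case needed in the sequel (\S\ref{exact tf}) is entirely elementary once the substitution is in place.
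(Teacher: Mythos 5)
Your proposal is correct and takes essentially the same route as the paper: both rest on the rationality of $\zeta_{\mathcal L_p}$ in $z=r^{s}$ (Theorem \ref{rationality}), from which the exact periodicity $\zeta_{\mathcal L_p}(s+i\mathbf{p})=\zeta_{\mathcal L_p}(s)$ yields the $n$-independence of the entire principal part, and the residue formula is the same computation the paper displays just before the theorem (evaluating $\lim_{s\to\omega}(s-\omega)\zeta_{\mathcal L_p}(s)$ by differentiating the denominator $1-\sum_{j}r^{n_j's}$), which you merely rephrase as $P(z_u)/Q'(z_u)$ together with the Jacobian of $z=r^{s}$. Your write-up is more self-contained than the paper's, which largely defers to \cite[Ch.~6]{L-vF2}, but the underlying argument is the same.
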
 
 
 Note that by contrast, in the lattice case of the archimedean theory of self-similar strings developed in \cite[Chs.~2 and 3]{L-vF2}, one has to assume that the gap sizes (and not just the scaling ratios) are integral powers of $r$ in order to obtain the counterpart of Theorem \ref{residue}. 
 
\begin{remark}[Comparison with the archimedean case]\label{stronglattice}
Part (i) of Theorem \ref{periodicity}, along with Theorem \ref{rationality}, shows that  the theory of  $p$-adic self-similar  strings is simpler than its archimedean counterpart. Indeed, not only is it the case that every $p$-adic self-similar  string $\mathcal L_p$ is lattice, but both the zeros and poles of $\zeta_{\mathcal L_p}(s)$ are periodically distributed along vertical lines, with the same period (because $\mathcal L_p$ is strongly lattice; see Theorem \ref{lattice}). By contrast, even if an archimedean self-similar string $\mathcal L$ is assumed to be `lattice', then the zeros of $\zeta_{\mathcal L}(s)$ are usually not periodically distributed because the multiplicative group generated by the distinct gap sizes need not be of rank one or coincide with the group generated by the distinct scaling ratios; see  \cite[Chs.~2 and 3]{L-vF2}.
In fact, from this point of view, only strongly lattice archimedean (or real) strings behave like $p$-adic self-similar strings. 
\end{remark}

 \section{Exact Tube Formulas for  $p$-Adic  Self-Similar  Strings}\label{exact tf}
   In view of Equation (\ref{sszf}), every $p$-adic self-similar   string $\mathcal L_p$ is strongly languid, with $\kappa=0$ and $A=r_Ng_K^{-1},$ in the notation of \cite[Definition 5.3]{L-vF2}. Indeed, Equation (\ref{sszf}) implies that 
$|\zeta_{\mathcal L_p}(s)|\ll(r_N^{-1}g_K)^{-|\Re(s)|}$, as $\Re(s)\rightarrow -\infty.$ 
Hence, we can apply the distributional  tube formula without error term (i.e., the last part of Theorem 
\ref{dtf} and of Corollary \ref{ftf}) with $W=\mathbb C$. Since by Theorem \ref{lattice}, $\mathcal L_p$ is a lattice string, we obtain (in light of Theorems \ref{rationality}, \ref{periodicity} and   \ref{residue}) the following simpler analogue of Theorem 8.25 in \cite{L-vF2}:\footnote{We note that instead,
 we could more generally apply parts (i) and (ii) of Theorem \ref{dtf} in order to obtain a distributional tube formula with or without error term, valid \emph{without} assuming that all of the complex dimensions of $\mathcal L_p$ are simple. This observation is used in the proof of Theorem \ref{truncated}. }

\begin{theorem} [Exact tube formulas for $p$-adic self-similar fractal strings]\label{exact tube formula}
Let $\mathcal{L}_p$ be a $p$-adic self-similar  string with simple complex dimensions. Then, for all $\varepsilon$ with $0<\varepsilon<g_Kr_N^{-1}$, the volume $V_{\mathcal L_p}(\varepsilon)$ is given by
\begin{equation}\label{ssvolume1}
V_{\mathcal L_p}(\varepsilon)= \sum_{\omega \in \mathcal D} c_{\omega}\varepsilon^{1-\omega},
\end{equation}
where $c_{\omega}=\frac{\res(\zeta_{\mathcal{L}_p}; ~ \omega)}{p(1-\omega)}$ for each $\omega \in \mathcal D=\mathcal D_{\mathcal L_p}(\mathbb C).$
 \end{theorem}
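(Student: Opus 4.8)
The plan is to specialize the general distributional tube formula of Theorem~\ref{dtf} (or, equivalently, its corollary \ref{ftf}) to the self-similar setting, invoking the structural results assembled in \S\ref{gzf sss}--\S\ref{zeros and poles}. First I would record the hypotheses needed to put Theorem~\ref{dtf} into its \emph{exact}, error-free form. The key observations are already in place: by the discussion opening \S\ref{exact tf}, every $p$-adic self-similar string is strongly languid (with $\kappa=0$, $A=r_N g_K^{-1}$), so we may take the screen arbitrarily far to the left, i.e.\ $W=\mathbb C$, and obtain $\mathcal R_p(\varepsilon)\equiv 0$. Since we assume all complex dimensions are simple, Corollary~\ref{ftf} applies verbatim with $W=\mathbb C$ and gives the exact formula
\[
V_{\mathcal L_p}(\varepsilon)=\sum_{\omega\in\mathcal D_{\mathcal L_p}(\mathbb C)} c_\omega\,\frac{\varepsilon^{1-\omega}}{1-\omega},
\qquad c_\omega=p^{-1}\res(\zeta_{\mathcal L_p};\omega).
\]
This is already of the asserted shape, once one folds the factor $(1-\omega)^{-1}$ into the constant; that is, the coefficient in \eqref{ssvolume1} is $c_\omega/(1-\omega)=\res(\zeta_{\mathcal L_p};\omega)/\bigl(p(1-\omega)\bigr)$, matching the statement of the theorem.

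The one genuine point to check is the \emph{range} of validity, namely why the formula holds for all $\varepsilon$ with $0<\varepsilon<g_K r_N^{-1}$ rather than only distributionally or asymptotically. Here I would argue as follows. The distributional formula holds for all $\varepsilon>0$; to promote it to a pointwise identity on the stated interval, I would compare it against the elementary exact expression for $V_{\mathcal L_p}(\varepsilon)$ furnished by Theorem~\ref{thin}, which expresses $V_{\mathcal L_p}(\varepsilon)$ as a finite sum determined by the counting function $k(\varepsilon)=\#\{j:n_j\le\log_p\varepsilon^{-1}\}$. Because the lengths $l_n$ of a self-similar string are all of the form $r^{\ell}g_k$ (integral powers of the multiplicative generator $r=p^{-d}$ times a gap, by the construction in \S\ref{construction} and Theorem~\ref{rationality}), the function $\varepsilon\mapsto V_{\mathcal L_p}(\varepsilon)$ is, on the scale $x=\log_p\varepsilon^{-1}$, governed by a single self-similar recursion. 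The constraint $\varepsilon<g_K r_N^{-1}$ is exactly the threshold below which every generator $G_k$ has already been ``resolved'' at the first level of the construction, so that the self-similarity relation $V_{\mathcal L_p}(r_j\varepsilon)$-type scaling closes up and the Dirichlet-series/Mellin computation converges to the sum over complex dimensions without boundary effects.

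Concretely, the cleanest route is to mimic the direct computation carried out for $\mathcal{CS}_3$ in Example~\ref{cantor volume}: starting from the finite-sum expression of Theorem~\ref{thin}, substitute the geometric decomposition of the lengths, collect the resulting geometric series, and recognize the outcome as the Fourier--Mellin expansion $\varepsilon^{1-D}G(\log_p\varepsilon^{-1})$ whose coefficients are precisely the residues $\res(\zeta_{\mathcal L_p};\omega)$ computed in \eqref{residue equation} of Theorem~\ref{residue}. The periodicity and simplicity of the complex dimensions (Theorem~\ref{periodicity}) guarantee that each vertical line $\{\omega_u+in\mathbf p:n\in\mathbb Z\}$ contributes a convergent Fourier series in the variable $x=\log_p\varepsilon^{-1}$, exactly as the pointwise remark following \eqref{tfc} illustrates in the Cantor case. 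Summing the finitely many such series over $u=1,\dots,q$ yields \eqref{ssvolume1} pointwise on the asserted $\varepsilon$-range.

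I expect the main obstacle to be the justification of the pointwise (as opposed to merely distributional) validity and the sharp determination of the threshold $g_K r_N^{-1}$. The residue identification and the passage from Corollary~\ref{ftf} are essentially bookkeeping once the strong languidity and simplicity hypotheses are granted; what requires care is verifying that for $0<\varepsilon<g_K r_N^{-1}$ the counting function $k(\varepsilon)$ in Theorem~\ref{thin} behaves compatibly with the self-similar scaling, so that the finite-sum volume and the residue sum coincide as honest functions of $\varepsilon$ and not just as distributions. Establishing this interval precisely---showing it is the largest range on which no ``initial-segment'' correction terms appear---is the delicate step, and I would handle it by tracking the smallest gap $g_K$ against the smallest scaling ratio $r_N$ in the first generation of the construction.
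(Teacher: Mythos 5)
Your first paragraph is exactly the paper's argument: the theorem is obtained by specializing Corollary \ref{ftf} to the strongly languid case with $W=\mathbb C$ and $\mathcal R_p(\varepsilon)\equiv 0$, using the observation at the start of \S\ref{exact tf} that every $p$-adic self-similar string is strongly languid with $\kappa=0$ and $A=r_Ng_K^{-1}$, and then folding the factor $(1-\omega)^{-1}$ into the coefficient $c_\omega$. That is the whole proof, and on this core your proposal is correct.

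Where you go astray is in treating the pointwise validity and the threshold $g_Kr_N^{-1}$ as the ``delicate step.'' The theorem, like Corollary \ref{5.13}, is an \emph{exact distributional} tube formula; the paper explicitly states (in the remark following Corollary \ref{ftf}) that it restricts attention to distributional formulas in the nonarchimedean setting and defers pointwise versions to future work, verifying them only in the worked examples ($\mathcal{CS}_3$, $\mathcal{FS}_2$). So no comparison with the elementary finite-sum expression of Theorem \ref{thin}, and no direct Fourier--Mellin computation in the style of Example \ref{cantor volume}, is needed or carried out for the general statement. Moreover, the restriction $0<\varepsilon<g_Kr_N^{-1}$ is not extracted by ``tracking the smallest gap against the smallest scaling ratio'': it comes for free from the hypothesis of the strongly languid explicit formula in \cite{L-vF2}, which yields the exact formula precisely for $\varepsilon<A^{-1}$ with $A=r_Ng_K^{-1}$, i.e.\ $\varepsilon<g_Kr_N^{-1}$. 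Your proposed program for upgrading to a pointwise identity for a general self-similar string is only sketched, not executed, so if you intend the theorem in that stronger pointwise sense there is a genuine gap; read as the distributional statement the paper intends, your proof is complete after the first paragraph and the rest is unnecessary.
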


\begin{corollary}\label{5.13}
Let $\mathcal{L}_p$ be a $p$-adic self-similar   string with multiplicative generator $r$. Assume that all the complex dimensions of  $\mathcal{L}_p$ are simple. Then, for all $\varepsilon$ with $0<\varepsilon<g_Kr_N^{-1}$, the volume $V_{\mathcal L_p}(\varepsilon)$ is given by the following exact distributional tube formula{\rm:}
\begin{equation}\label{ssvolume2}
V_{\mathcal L_p}(\varepsilon)= \sum_{u=1}^q \varepsilon^{1-\omega_u}G_u(\log_{1/r}\varepsilon^{-1}),
\end{equation}
where $1/r=p^d$ {\rm(}as in Equation {\rm(}\ref{Q}{\rm)}{\rm)}, and for each $u=1, \ldots, q,~ G_u$ is a real-valued periodic function of period 1 on $\mathbb R$ corresponding to the line of complex dimensions through 
$\omega_u ~ (\omega_1=D>\Re (\omega_2) \geq \cdots \geq \Re (\omega_q)),$ and is given by the following {\rm(}conditionally and also distributionally convergent{\rm)} Fourier series{\rm:} 

 \begin{equation}\label{Gu}
G_u(x)= \frac{\res(\zeta_{\mathcal{L}_p};\omega_u)}{p}
\sum_{n\in \mathbb Z} \frac{e^{2\pi i n x}}{ 1-\omega_u-in\mathbf{p} },
\end{equation}
where {\rm(}as in Equation {\rm(}\ref{residue equation}{\rm)} of Theorem \ref{residue}{\rm)}, 
 \begin{equation*}
\res(\zeta_{\mathcal{L}_p}; \omega_u )=
\frac
{\sum_{k =1}^{K} r^{m_{k}'\omega_u}}
{\log{r^{-1}}\sum_{j=1}^N n_j'r^{n_j'\omega_u}}.
\end{equation*}
Moreover, $G_u$ is nonconstant and bounded.
 \end{corollary}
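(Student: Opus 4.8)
The plan is to start from the exact tube formula in Theorem \ref{exact tube formula}, namely $V_{\mathcal L_p}(\varepsilon)=\sum_{\omega\in\mathcal D}c_\omega\varepsilon^{1-\omega}$ with $c_\omega=\res(\zeta_{\mathcal L_p};\omega)/(p(1-\omega))$, and simply group the terms of this sum according to which vertical line they lie on. By part (i) of Theorem \ref{periodicity}, every complex dimension is of the form $\omega_u+in\mathbf p$ for some $u\in\{1,\dots,q\}$ and $n\in\mathbb Z$, and by Theorem \ref{residue} the residue is independent of $n$; thus I would write $\mathcal D=\bigsqcup_{u=1}^q\{\omega_u+in\mathbf p:n\in\mathbb Z\}$ and split the sum as $V_{\mathcal L_p}(\varepsilon)=\sum_{u=1}^q\sum_{n\in\mathbb Z}c_{\omega_u+in\mathbf p}\,\varepsilon^{1-\omega_u-in\mathbf p}$.

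Next I would factor out $\varepsilon^{1-\omega_u}$ from each inner sum and absorb the residual $\varepsilon^{-in\mathbf p}$ into an exponential. Setting $x=\log_{1/r}\varepsilon^{-1}$, so that $\varepsilon^{-1}=(1/r)^x=p^{dx}$ and hence $\varepsilon^{-in\mathbf p}=p^{-din\mathbf p\cdot(-x)}$; using $\mathbf p=2\pi/(d\log p)$ from Definition \ref{oscillatoryperiod} one checks $\varepsilon^{-in\mathbf p}=e^{2\pi inx}$. This is the key computation that turns the $\varepsilon$-powers into a genuine Fourier series in $x$. Substituting $\res(\zeta_{\mathcal L_p};\omega_u+in\mathbf p)=\res(\zeta_{\mathcal L_p};\omega_u)$ and $1-\omega=1-\omega_u-in\mathbf p$ into $c_{\omega_u+in\mathbf p}$ then yields exactly the claimed $G_u(x)=\frac{\res(\zeta_{\mathcal L_p};\omega_u)}{p}\sum_{n\in\mathbb Z}\frac{e^{2\pi inx}}{1-\omega_u-in\mathbf p}$, with the residue value supplied by Theorem \ref{residue}. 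The periodicity of $G_u$ with period $1$ is immediate from $e^{2\pi in(x+1)}=e^{2\pi inx}$.

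What remains is to justify the analytic properties asserted at the end: that each $G_u$ is \emph{real-valued}, \emph{nonconstant}, and \emph{bounded}, and that the Fourier series is conditionally (and distributionally) convergent. For real-valuedness I would pair the $n$ and $-n$ terms: since $\res(\zeta_{\mathcal L_p};\omega_u)$ and $\omega_u$ need not be real on a non-principal line, I would instead invoke the fact that $V_{\mathcal L_p}(\varepsilon)$ is manifestly real together with the conjugate-symmetry of the complex dimensions (poles come in conjugate pairs, $\omega_u-in\mathbf p$ being the conjugate of $\bar\omega_u+in\mathbf p$), arguing that the pairing across each conjugate line produces real contributions; alternatively one cites the corresponding archimedean statement from \cite[Ch.~8]{L-vF2}. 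Boundedness and conditional convergence follow because the coefficients are $O(1/|n|)$, so the series is a Fourier series of a function of bounded variation (the partial sums converge pointwise away from the jumps and are uniformly bounded); I would reference \cite[\S8.4]{L-vF2} for the precise convergence statement rather than reprove it. Nonconstancy is the one genuinely substantive point: the $n=\pm1$ Fourier coefficients are $\res(\zeta_{\mathcal L_p};\omega_u)/(p(1-\omega_u\mp i\mathbf p))$, which are nonzero precisely because $\res(\zeta_{\mathcal L_p};\omega_u)\neq0$ (as $\omega_u$ is a genuine simple pole) and $1-\omega_u\mp i\mathbf p\neq\infty$; hence not all nonzero Fourier coefficients vanish and $G_u$ cannot reduce to a constant.

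I expect the main obstacle to be the verification of real-valuedness and the careful handling of conditional convergence, since these are not purely formal: one must correctly exploit the conjugate symmetry of $\mathcal D_{\mathcal L_p}$ and the reality of $V_{\mathcal L_p}(\varepsilon)$, and one must be precise about the mode of convergence (pointwise versus distributional) of a series whose coefficients decay only like $1/|n|$. The algebraic bookkeeping of the change of variable $x=\log_{1/r}\varepsilon^{-1}$ and the identity $\varepsilon^{-in\mathbf p}=e^{2\pi inx}$ is routine, and I would delegate the convergence and reality details to the cited archimedean results in \cite{L-vF2}, which apply verbatim once Theorems \ref{rationality}, \ref{periodicity} and \ref{residue} have placed us in the lattice setting.
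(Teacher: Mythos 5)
Your proposal is correct and follows essentially the same route as the paper: the paper's own proof simply observes that the regrouping of the exact tube formula of Theorem \ref{exact tube formula} along the vertical lines $\{\omega_u+in\mathbf p\}$ yields \eqref{ssvolume2} and \eqref{Gu}, notes that $G_u$ is clearly nonconstant, and cites \cite[Formula~(1.13)]{L-vF2} for boundedness. You have merely spelled out the change of variable $\varepsilon^{-in\mathbf p}=e^{2\pi inx}$ and the convergence/reality issues in more detail than the paper does.
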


 \begin{proof}
 That the explicit formula for $V_{\mathcal L_p}(\varepsilon)$ can be written as a sum over $\varepsilon^{1-\omega_u}$ times a periodic function of period $1$ in $\log_{1/r}\varepsilon^{-1}$ in case all complex dimensions are simple follows from Theorem~\ref{exact tube formula},
as does the formula for $G_u$.
This latter function is clearly nonconstant.
That it is bounded follows from~\cite[Formula~(1.13)]{L-vF2}.
 \end{proof}
 
 \begin{remark}\label{strong lattice}
In comparing our results with the corresponding results in Chapter 2 and \S8.4 of \cite{L-vF2}, obtained for real self-similar fractal strings, the reader should keep in mind the following two facts: (i) the simplification brought upon by the ``strong lattice property'' of $p$-adic self-similar   strings; see Theorem \ref{residue} and Remark \ref{stronglattice} above. 
(ii) By construction, any $p$-adic self-similar  string $\mathcal L_p$ (as defined in this paper) has total length $L$ equal to one: $L=\mu_H(\mathcal L_p)=\zeta_{\mathcal L_p}(1)=\mu_H(\mathbb Z_p)=1.$ Indeed, for notational simplicity, we have assumed that the similarity transformations 
$\Phi_j$ $(j=1, \dots, N)$ are self-maps of the `unit interval' $\mathbb Z_p$, rather than of an arbitrary `interval' of length $L$ in $\mathbb Q_p$. Clearly, only minor adjustments are needed in order to deal with the case of an arbitrary interval. 
\end{remark}

\begin{remark}
It would be interesting to obtain a geometric interpretation of the coefficients of the fractal tube formulas (\ref{ssvolume1}) and (\ref{ssvolume2}), in terms of nonarchimedean fractal curvatures, along the lines suggested by the work of \cite{L-vF2} and  \cite{LapPe1, LapPe2, LPW} (in the archimedean setting). 
It would also be interesting to extend these results to higher-dimensional $p$-adic self-similar sets or tilings (as was done in the Euclidean case in \emph{loc. cit.}). 
\end{remark}

\begin{theorem}[Truncated tube formula]\label{truncated}
Let $\mathcal{L}_p$ be an arbitrary $p$-adic self-similar   string with multiplicative generator $r$.  Then, for all $\varepsilon$ with $0<\varepsilon<g_Kr_N^{-1}$, 
\begin{equation}\label{truncated tf}
V_{\mathcal L_p}(\varepsilon)=\varepsilon^{1-D}(G(\log_{1/r}\varepsilon^{-1}) + o(1)), 
\end{equation}
where $o(1)\rightarrow 0$ as $\varepsilon \rightarrow 0^+$ and  $G=G_1$ is the nonconstant, bounded periodic function of period 1 given by Equation {\rm(}\ref{Gu}{\rm)} of Theorem \ref{5.13} {\rm(}with $u=1$ and $\omega_1=D${\rm)}.
\end{theorem}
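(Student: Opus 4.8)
The plan is to derive the truncated formula directly from the exact tube formula of Theorem~\ref{exact tube formula} (or equivalently, Corollary~\ref{5.13}), by isolating the contribution of the rightmost vertical line of complex dimensions (those through $\omega_1=D$) and showing that all other lines contribute a term that is $o(\varepsilon^{1-D})$ as $\varepsilon\to0^+$. First I would address the point that Theorem~\ref{exact tube formula} and Corollary~\ref{5.13} were stated under the hypothesis that all complex dimensions are simple, whereas Theorem~\ref{truncated} concerns an \emph{arbitrary} $p$-adic self-similar string. As the footnote to Theorem~\ref{exact tube formula} indicates, one should instead invoke parts (i) and (ii) of the distributional tube formula in Theorem~\ref{dtf} (with $W=\mathbb C$ and $\mathcal R_p\equiv0$, which is legitimate since $\mathcal L_p$ is strongly languid by \S\ref{exact tf}). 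This yields
\[
V_{\mathcal L_p}(\varepsilon)= \sum_{\omega\in\mathcal D}\res\!\left(\frac{p^{-1}\zeta_{\mathcal L_p}(s)\varepsilon^{1-s}}{1-s};\omega\right),
\]
valid for all $\varepsilon$ in the stated range, \emph{without} any simplicity assumption.

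Next I would group the sum over $\mathcal D=\{\omega_u+in\mathbf p : u=1,\dots,q,\ n\in\mathbb Z\}$ by vertical line. By part~(iii) of Theorem~\ref{periodicity}, $\omega_1=D$ is simple and has $\Re(\omega_u)<D$ for $u\geq2$; by part~(ii) of Theorem~\ref{periodicity} and part~(i) of Theorem~\ref{residue}, the principal part at $\omega_u+in\mathbf p$ is independent of $n$. The line through $D$, being simple, contributes exactly the term
\[
\varepsilon^{1-D}\,G(\log_{1/r}\varepsilon^{-1}),\qquad G=G_1,
\]
with $G$ given by Equation~(\ref{Gu}), since along this line each residue computation reproduces the Fourier series defining $G_1$ (this is precisely the $u=1$ summand of Corollary~\ref{5.13}). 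The remaining lines ($u=2,\dots,q$) contribute, after extracting the common factor $\varepsilon^{1-D}$, terms of the form $\varepsilon^{\,D-\omega_u}$ times bounded periodic functions of $\log_{1/r}\varepsilon^{-1}$ (or, in the non-simple case, $\varepsilon^{\,D-\omega_u}$ times bounded functions arising from the higher-order residues, which introduce at most polynomial factors in $\log\varepsilon^{-1}$). Since $\Re(D-\omega_u)=D-\Re(\omega_u)>0$ for each $u\geq2$, every such term tends to $0$ as $\varepsilon\to0^+$, and their finite sum is the $o(1)$ appearing in Equation~(\ref{truncated tf}).

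The function-theoretic properties of $G=G_1$ (nonconstant, bounded, periodic of period~$1$) I would simply quote from Corollary~\ref{5.13}, where they were already established via \cite[Formula~(1.13)]{L-vF2}. The main obstacle, I expect, is the bookkeeping in the non-simple case: one must verify that higher-order poles $\omega_u$ (for $u\geq2$) produce error terms that are genuinely $o(\varepsilon^{1-D})$ even though the residue calculation yields factors of $\log\varepsilon^{-1}$ raised to powers up to one less than the pole order. This is handled by observing that any fixed power of $\log\varepsilon^{-1}$ is dominated by the strictly positive exponential gap $\varepsilon^{D-\Re(\omega_u)}$, so that $\varepsilon^{D-\Re(\omega_u)}(\log\varepsilon^{-1})^m\to0$; the periodicity and convergence of the resulting Fourier-type sums along each line follow from the strong languidity and the uniform control of the principal parts guaranteed by Theorem~\ref{residue}(i). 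A secondary technical point is that the convergence in Equation~(\ref{detf}) is \emph{a priori} distributional, so strictly speaking the statement~(\ref{truncated tf}) should be read in the distributional sense unless, as in the Cantor and Fibonacci examples, pointwise convergence of the Fourier series can be checked directly.
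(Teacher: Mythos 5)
Your argument is correct, but it takes a different route from the paper's. You start from the \emph{exact} tube formula (Theorem \ref{dtf} in the strongly languid case, $W=\mathbb C$, $\mathcal R_p\equiv 0$), keep every vertical line of complex dimensions, and then estimate by hand the contribution of the lines $u\geq 2$, including the bookkeeping for possible higher-order poles there (logarithmic factors dominated by $\varepsilon^{D-\Re(\omega_u)}$). The paper instead invokes the \emph{first} part of Theorem \ref{dtf} --- the distributional tube formula \emph{with} error term --- and exploits the lattice structure to choose the screen $S$ to be a vertical line lying strictly between $\Re(s)=D$ and the next line of complex dimensions. With that choice the only visible complex dimensions are those on $\Re(s)=D$, which are automatically simple by parts (ii) and (iii) of Theorem \ref{periodicity}, so the visible sum is exactly $\varepsilon^{1-D}G(\log_{1/r}\varepsilon^{-1})$ and the remainder is absorbed into $\mathcal R_p(\varepsilon)=O(\varepsilon^{1-\sup S})=O(\varepsilon^{1-D+\delta})$ via the estimate (\ref{estimate}); the non-simple poles on the lower lines never have to be examined at all. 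Your approach costs you the extra analysis of higher-order residues and of the conditional (distributional) convergence of the Fourier-type series along each lower line, but in exchange it yields a more explicit decomposition of the $o(1)$ term; the paper's screen argument is shorter and delegates all of that to the error estimate already built into Theorem \ref{dtf}. Both routes correctly deliver the stronger conclusion $E(\varepsilon)=O(\varepsilon^{1-D+\delta})$ for some $\delta>0$, which is what is actually needed later in the proof of Theorem \ref{average content}, and you are right to flag that the statement is to be read distributionally unless pointwise convergence is verified separately.
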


\begin{proof}
This follows from the method of proof of 
Corollary 8.27 in  \cite{L-vF2}   in the easy case of a lattice string and with $2\varepsilon$ replaced by 
$\varepsilon$  and with $L:=1;$  see Remark \ref{strong lattice}.
In particular, we have the following `truncated tube formula':
\begin{equation}\label{4.25}
V_{\mathcal L_p}(\varepsilon)=\varepsilon^{1-D}G(\log_{1/r}\varepsilon^{-1}) + E(\varepsilon), 
\end{equation}
where $E(\varepsilon)$ is an error term that can be estimated much as in \emph{loc.~cit.}  In particular,   there exists $\delta>0$ such that $\varepsilon^{-(1-D)}E(\varepsilon)=O(\varepsilon^{\delta}),$  as $\varepsilon\rightarrow 0^+.$

Furthermore, since we limit ourselves here to the first line of complex dimensions, and since those complex dimensions are always simple (by parts (ii) and  (iii) of Theorem \ref{periodicity}), we do \emph{not}
have to assume (as in Theorem \ref{exact tube formula} and Corollary \ref{5.13}) that all the complex dimensions of $\mathcal L_p$ are simple in order for Equation (\ref{4.25}) and the corresponding error estimate for $E(\varepsilon)$ to be valid. 

More specifically, we note that Equation (\ref{4.25}) and the corresponding error estimate for 
$E(\varepsilon)$ (namely, $\delta>0$ and so $E(\varepsilon)=o(\varepsilon^{-(1-D)})$ as $\varepsilon \rightarrow 0^+$) follow from the first part of Theorem \ref{dtf} (the explicit tube formula with error term, applied to a suitable window), along with the fact that the complex dimensions on the rightmost vertical line $\Re(s)=D$ are simple (according to parts (ii) and (iii) of Theorem \ref{periodicity}).
Here, since $\mathcal L_p$ is a lattice string, we can simply choose the screen $S$ to be a vertical line lying strictly between $\Re(s)=D$ and the next vertical line of complex dimensions (if such a line exists). 
 \end{proof}

 \section{The Average Minkowski Content}\label{amc}  
 
The (inner) Minkowski dimension and the (inner) Minkowski content of a $p$-adic fractal string $\mathcal L_p$ (or, equivalently, of its metric boundary $\beta \mathcal L_p$, see Definition
  \ref{volume definition}) are defined exactly as the corresponding notion for a real fractal string 
  (see \cite{L-vF2}, Definition 1.2), 
 except for the fact that we use the definition of $V(\varepsilon)=V_{\mathcal L_p}(\varepsilon)$ provided in Equation (\ref{volume}) of \S\ref{inner tube}.
More specifically,
 the \emph{Minkowski dimension} of $\mathcal L_p$ is given by 
\begin{equation}\label{dimension}
D_M:= \inf  \left\{\alpha \geq  0: V_{\mathcal L_p}(\varepsilon)=O(\varepsilon ^{1-\alpha}) ~\mbox{as}~
\varepsilon \rightarrow 0^{+}  \right\}.  \end{equation}
Furthermore, $\mathcal L_p$ is said to be \emph{Minkowski measurable}, with \emph{Minkowski content}
$\mathcal M$, if the limit 
\begin{equation}
\mathcal M=\lim_{\varepsilon\rightarrow 0^{+}} V_{\mathcal L_p}(\varepsilon)\varepsilon ^{-(1-D_M)}
\end{equation}
exists in  $(0, \infty).$ Otherwise, $\mathcal L_p$ is said to be \emph{Minkowski nonmeasurable.} 

\begin{remark}
Note that since 
$V_{\mathcal L_p}(\varepsilon)=\mathcal V_{\mathcal L_p}(\varepsilon)-\mu_H(\beta \mathcal L_p)$
in light of Equation (\ref{volume}),
there is an analogy between the above definition of the Minkowski dimension and that of ``exterior dimension'', which is used in chaos theory to study certain archimedean `fat fractals' (dynamically defined fractals with positive Lebesgue measure); see, e.g., \cite{GMOY} and the survey article \cite{Ott}.
In the present nonarchimedean case, however, for any $p$-adic fractal string, it is necessary to substract $\mu_H(\beta \mathcal L_p)$ from $\mathcal V_{\mathcal L_p}(\varepsilon)$. 
Indeed, otherwise, the metric boundary of every $p$-adic string (even a single interval) would be a `fat fractal'; see \cite{LapLu3} and Remark \ref{limit} above.
\end{remark}
 
 The next result follows  from the truncated tube formula provided in Theorem \ref{truncated}, along with the corresponding error estimate. 

\begin{theorem}\label{Minkowski nonmeasurable}
A  $p$-adic self-similar  string $\mathcal L_p$ is never Minkowski measurable.  Moreover, it has multiplicatively periodic oscillations of order $D$ in its geometry. 
\end{theorem}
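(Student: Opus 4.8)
The plan is to derive both conclusions directly from the truncated tube formula of Theorem~\ref{truncated}, namely
\[
V_{\mathcal L_p}(\varepsilon)=\varepsilon^{1-D}\bigl(G(\log_{1/r}\varepsilon^{-1})+o(1)\bigr)\qquad(\varepsilon\to0^+),
\]
where $G=G_1$ is the nonconstant bounded periodic function of period $1$ given in Equation~(\ref{Gu}). First I would observe that $G$ is genuinely nonconstant: by Theorem~\ref{residue}, $\res(\zeta_{\mathcal L_p};D)\neq0$, so the Fourier coefficient of $G$ at frequency $n$ equals $\frac{\res(\zeta_{\mathcal L_p};D)}{p(1-D-in\mathbf p)}$, which is nonzero for every $n\in\mathbb Z$. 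In particular infinitely many Fourier coefficients do not vanish, so $G$ cannot reduce to its zeroth coefficient; this is the ``multiplicatively periodic oscillations of order $D$'' asserted in the theorem, the relevant period being $\mathbf p$ (equivalently period $1$ in the variable $\log_{1/r}\varepsilon^{-1}$).

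The core of the proof is the failure of Minkowski measurability. Since $\sigma_{\mathcal L_p}=D_M=D$ (Remark~\ref{3.7} together with the identification of the similarity and Minkowski dimensions recalled before Theorem~\ref{periodicity}), measurability would require the limit
\[
\mathcal M=\lim_{\varepsilon\to0^+}V_{\mathcal L_p}(\varepsilon)\,\varepsilon^{-(1-D)}
\]
to exist in $(0,\infty)$. But by Theorem~\ref{truncated} this quantity equals $G(\log_{1/r}\varepsilon^{-1})+o(1)$, so the existence of the limit is equivalent to the existence of $\lim_{\varepsilon\to0^+}G(\log_{1/r}\varepsilon^{-1})$. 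As $\varepsilon\to0^+$ the argument $x=\log_{1/r}\varepsilon^{-1}\to+\infty$ runs through every residue modulo $1$, so $G(\log_{1/r}\varepsilon^{-1})$ has the same set of limit points as $G$ on a full period. Because $G$ is continuous (its Fourier series being uniformly convergent) and nonconstant, it attains distinct values $G(x_0)\neq G(x_1)$ for some $x_0,x_1\in[0,1)$; choosing sequences $\varepsilon_n^{(0)},\varepsilon_n^{(1)}\to0^+$ with $\log_{1/r}(\varepsilon_n^{(i)})^{-1}\equiv x_i\pmod 1$ yields two subsequences along which $V_{\mathcal L_p}(\varepsilon)\varepsilon^{-(1-D)}$ tends to $G(x_0)$ and $G(x_1)$ respectively. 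Hence the limit does not exist and $\mathcal L_p$ is Minkowski nonmeasurable.

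The main obstacle, and the only point requiring care, is justifying that $G$ is continuous and nonconstant so that the two-subsequence argument really produces distinct limits rather than, say, a single value attained degenerately. Continuity I would obtain from the absolute convergence of $\sum_n |1-D-in\mathbf p|^{-1}$—this is not automatic since the terms decay only like $1/|n|$, so I would instead invoke the conditional (and uniform-on-compacta) convergence of the Fourier series asserted in Corollary~\ref{5.13}, or equivalently appeal to \cite[Formula~(1.13)]{L-vF2}, which gives $G$ explicitly as a bounded sawtooth-type function with at worst jump discontinuities; at a continuity point $x_0$ of $G$ one still has a well-defined one-sided value, and nonconstancy guarantees two points with different values, which suffices. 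A clean alternative that sidesteps continuity entirely is to note that a bounded periodic nonconstant function necessarily has $\liminf_x G(x)<\limsup_x G(x)$, and since $x\mapsto\log_{1/r}\varepsilon^{-1}$ is a homeomorphism of $(0,g_Kr_N^{-1})$ onto a ray tending to $+\infty$, these distinct $\liminf$ and $\limsup$ are realized along $\varepsilon\to0^+$, immediately contradicting the existence of the measurability limit.
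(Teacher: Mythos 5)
Your proof is correct and follows essentially the same route as the paper: the paper's own argument simply invokes Theorem~\ref{truncated} together with the nonconstancy of the bounded periodic function $G=G_1$ to conclude that $\varepsilon^{-(1-D)}V_{\mathcal L_p}(\varepsilon)$ has no limit as $\varepsilon\to0^+$. Your closing ``clean alternative'' (a bounded, periodic, nonconstant $G$ forces $\liminf<\limsup$ along $\varepsilon\to0^+$, no continuity needed) is exactly the right way to finish and makes the earlier detour through uniform convergence of the Fourier series unnecessary.
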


\begin{proof}
This follows immediately from Theorem \ref{truncated}
and the fact that $G=G_1$ is a \emph{nonconstant} periodic function, which implies (in light of Equation (\ref{truncated tf})) that the limit of 
$\varepsilon^{-(1-D)}V_{\mathcal L_p}(\varepsilon)$ does not exist  as $\varepsilon \rightarrow 0^+$.
\end{proof}
 
According to Theorem \ref{Minkowski nonmeasurable}, a $p$-adic  self-similar string does not have a well-defined Minkowski content, because it is not Minkowski measurable. Nevertheless, as we shall see in Theorem \ref{average content} below, it does have a suitable `average content' $\mathcal{M}_{av},$ in the following sense:

\begin{definition}\label{defcontent}
Let $\mathcal L_p$ be a $p$-adic fractal string of dimension $D$. The \emph{average Minkowski content}, $\mathcal{M}_{av}$, is defined by the logarithmic Cesaro average
\[
\mathcal{M}_{av}=\mathcal{M}_{av}(\mathcal L_p):=\lim_{T\rightarrow \infty}\frac{1}{\log T}\int_{1/T}^1\varepsilon^{-(1-D)}V_{\mathcal L_p}(\varepsilon) 
\frac{d\varepsilon}{\varepsilon},
\]
provided this limit exists and is a finite positive real number. 
\end{definition}

\begin{theorem}\label{average content} 
Let $\mathcal L_p$ be a $p$-adic self-similar  string of dimension $D$. Then the average Minkowski content of  $\mathcal {L}_p$ exists and is given by the finite positive number
\begin{equation}\label{content}
\mathcal{M}_{av}=\frac{1}{p(1-D)} \res(\zeta_{\mathcal L_p}; D)=\frac{1}{p(1-D)} \frac
{\sum_{k =1}^{K} r^{m_{k}'D}}
{\log{r^{-1}}\sum_{j=1}^N n_j'r^{n_j'D}}.
\end{equation} 
\end{theorem}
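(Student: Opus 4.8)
The plan is to substitute the truncated tube formula from Theorem \ref{truncated} directly into the definition of the average Minkowski content (Definition \ref{defcontent}) and evaluate the resulting logarithmic Ces\`aro limit. By Theorem \ref{truncated}, for $0<\varepsilon<g_Kr_N^{-1}$ we have
\[
\varepsilon^{-(1-D)}V_{\mathcal L_p}(\varepsilon)=G(\log_{1/r}\varepsilon^{-1}) + o(1), \qquad \varepsilon\to 0^+,
\]
where $G=G_1$ is periodic of period $1$. The first step is thus to reduce the computation of $\mathcal M_{av}$ to computing the average of the bounded periodic function $G$ against the measure $d\varepsilon/\varepsilon$ on $(1/T,1)$; the $o(1)$ term contributes nothing to the limit (its logarithmic average over $(1/T,1)$ tends to $0$ by boundedness and a standard Ces\`aro-type estimate), so
\[
\mathcal M_{av}=\lim_{T\to\infty}\frac{1}{\log T}\int_{1/T}^1 G(\log_{1/r}\varepsilon^{-1})\,\frac{d\varepsilon}{\varepsilon}.
\]

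Second, I would change variables to turn this into the mean value of a periodic function. Setting $x=\log_{1/r}\varepsilon^{-1}$, so that $dx = -\,d\varepsilon/(\varepsilon\log(1/r))$ and $x$ runs from $0$ to $\log_{1/r}T$ as $\varepsilon$ runs from $1$ down to $1/T$, the integral becomes $\log(1/r)\int_0^{\log_{1/r}T}G(x)\,dx$, while $\log T=\log(1/r)\cdot\log_{1/r}T$. Hence the $\log(1/r)$ factors cancel and
\[
\mathcal M_{av}=\lim_{T\to\infty}\frac{1}{\log_{1/r}T}\int_0^{\log_{1/r}T}G(x)\,dx=\int_0^1 G(x)\,dx,
\]
since the time average of a bounded periodic function of period $1$ over $[0,Y]$ converges to its mean over one period as $Y\to\infty$.

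Third, I would compute $\int_0^1 G(x)\,dx$ from the Fourier series in Equation \eqref{Gu} (with $u=1$, $\omega_1=D$). Integrating the series $G(x)=\frac{\res(\zeta_{\mathcal L_p};D)}{p}\sum_{n\in\mathbb Z}\frac{e^{2\pi i n x}}{1-D-in\mathbf p}$ term by term over one period annihilates every term with $n\neq 0$ (since $\int_0^1 e^{2\pi i n x}\,dx=0$), leaving only the $n=0$ term, which yields
\[
\int_0^1 G(x)\,dx=\frac{\res(\zeta_{\mathcal L_p};D)}{p}\cdot\frac{1}{1-D}=\frac{1}{p(1-D)}\res(\zeta_{\mathcal L_p};D).
\]
Inserting the explicit value of the residue at $\omega_1=D$ from Theorem \ref{residue} then gives the stated closed form. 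Finally, positivity and finiteness of $\mathcal M_{av}$ follow because $D<1$ (so $1-D>0$) and the residue at $D$ is a genuine positive number (numerator and denominator in Equation \eqref{residue equation} are positive, as $r\in(0,1)$ and $n_j',m_k'>0$).

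The main obstacle—and the one place requiring genuine care rather than routine manipulation—is the rigorous justification that the $o(1)$ error and the term-by-term integration are legitimate inside the Ces\`aro limit. Specifically, I must confirm that the logarithmic average of the $o(1)$ term vanishes (which needs the uniform boundedness of $\varepsilon^{-(1-D)}V_{\mathcal L_p}(\varepsilon)$ near $\varepsilon=0$, guaranteed by Theorem \ref{truncated} since $G$ is bounded) and that interchanging summation and integration in the Fourier series is valid. The latter is subtle because the series in Equation \eqref{Gu} is only conditionally convergent; however, integrating over a full period is the favorable case, as the orthogonality relations apply to the partial sums directly and the $n=0$ coefficient is isolated without needing absolute convergence. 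This is precisely the step where the periodicity of the complex dimensions (Theorem \ref{periodicity}) does the essential work, since it is what makes $G$ exactly periodic and hence amenable to this averaging argument.
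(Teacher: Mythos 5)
Your proposal is correct and follows essentially the same route as the paper's proof: substitute the truncated tube formula of Theorem \ref{truncated}, show that the $O(\varepsilon^{\delta})$ error and each oscillatory Fourier mode of $G=G_1$ have vanishing logarithmic Ces\`aro average, and read off the constant coefficient $\frac{1}{p(1-D)}\res(\zeta_{\mathcal L_p};D)$. Your change of variables to the mean value of the periodic function $G$ over one period is just a cosmetic repackaging of the paper's direct computation of $\lim_{T\to\infty}\frac{1}{\log T}\int_{1/T}^{1}\varepsilon^{in/\log r}\,\frac{d\varepsilon}{\varepsilon}=0$ for $n\neq 0$.
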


 \begin{proof}
In light of (the proof of) Theorem \ref{truncated},  we have for all $0<\varepsilon\leq 1$ and for some $\delta>0$,
\[
\varepsilon^{-(1-D)}V(\varepsilon)=G(\log_{1/r}\varepsilon^{-1}) + O(\varepsilon^{\delta}),
\]
where $G$ is the nonconstant and bounded periodic function of period~$1$ given by Equation~\eqref{Gu}
of Theorem \ref{5.13} (with $u=1$ and $\omega_1=D$). (See Equation (\ref{4.25}) and the text surrounding it.)
Noting that
\[
\lim_{T\to\infty}\frac{1}{\log T}\int_{1/T}^1\varepsilon^\delta
\,\frac{d\varepsilon}{\varepsilon}=0,
\]
and that each oscillatory term of $G_1$ (for $n\ne0$ in~\eqref{Gu}, $n\in \mathbb Z$) gives a vanishing contribution as well,
\[
\lim_{T\to\infty}\frac{1}{\log T}\int_{1/T}^1\varepsilon^{in/\log r}
\,\frac{d\varepsilon}{\varepsilon}=0,
\]
we conclude that
\[
\lim_{T\to\infty}\frac{1}{\log T}\int_{1/T}^1\varepsilon^{-(1-D)}V_{\mathcal L_p}(\varepsilon)\frac{d\varepsilon}{\varepsilon}\]
gives the constant coefficient of $G=G_1$.
 \end{proof}
 
 \begin{remark}
Definition \ref{defcontent} and Theorem \ref{average content} are the exact nonarchimedean counterpart of \cite{L-vF2}, Definition 8.29 and Theorem 8.30.
\end{remark}

\begin{example}[Nonarchimedean Cantor string]\label{average content CS3}
The average Minkowski content of the nonarchimedean Cantor string $\mathcal{CS}_3$ is given by 
\[
\mathcal M_{av}(\mathcal{CS}_3)=\frac{1}{6(\log3-\log2)}.
\]
Indeed, we have seen in Example \ref{cantor string}  that $ D=\log_32,  \res(\zeta_{\mathcal {CS}_3}; D)=1/2\log3$ and $p=3$.
\end{example}

\begin{example}[Nonarchimedean Fibonacci string] \label{fibonacci}
Let $\Phi_1~ \mbox{and}~  \Phi_2$ be the two affine similarity contraction mappings of $\mathbb {Z}_2$ given (much as in \S\ref{sss}, with $N=p=2$) by 
\begin{equation*}
\Phi_1(x)=2x \quad \mbox{and} \quad \Phi_2(x)=1+4x,
\end{equation*}
with the respective scaling ratios $r_1=1/2$ and $r_2=1/4$. 
The associated $2$-adic self-similar string (introduced in \cite{LapLu2}) with generator $G=3+4\mathbb Z_2$ is called the 
\emph{nonarchimedean} \emph{Fibonacci string} and  denoted by $\mathcal{FS}_2$ (compare with the archimedean counterpart discussed in \cite[\S 2.3.2]{L-vF2}). 
It is given by the sequence
$\mathcal{FS}_2 =l_1, l_2, l_3, \dots$ and consists (for $m=1,2, \ldots$) of intervals of lengths 
$l_m=2^{-(m+1)}$ with multiplicities $f_{m},$ the Fibonacci numbers. (Recall that these numbers are defined by the recursive formula: $f_{m+1}=f_m+f_{m-1}, f_0=0$ and $f_1=1$.)
Alternatively, in the spirit  of Theorem \ref{complementary}, 
the nonarchimedean Fibonacci string is the bounded open subset of $\mathbb Z_2$  given by the following disjoint union of 2-adic intervals (necessarily its 2-adic convex components): 
\[\mathcal{FS}_2=(3+4\mathbb{Z}_2)\cup (6+8\mathbb{Z}_2)\cup (12+16\mathbb{Z}_2)\cup (13+
16\mathbb{Z}_2) \cup \cdots.\]
By Theorem \ref{geometric zeta function}, the geometric zeta function of $\mathcal{FS}_2$ is given (almost exactly as for the archimedean Fibonacci string, cf.~\emph{loc.~cit.}) by\footnote
{The minor difference between the two geometric zeta functions is due to the fact that the real Fibonacci string $\mathcal{FS}$ in \cite[\S2.3.2 and Exple.~8.32]{L-vF2} has total length 4 whereas the present 2-adic Fibonacci string $\mathcal{FS}_2$
has total length 1; see also part (ii) of Remark \ref{strong lattice} above.}

 \begin{equation}\label{gzff}
\zeta_{\mathcal{FS}_2}(s)
=\frac{4^{-s}}{1-2^{-s}-4^{-s}}. 
\end{equation}
Hence, the set of complex dimensions  of $\mathcal {FS}_2$ is given by
\begin{equation}\label{cdfs2}
\mathcal{D}_{\mathcal {FS}_2}=\{   D+i n \mathbf{p}:n \in \mathbb{Z} \}\cup 
\{  -D+i (n+1/2) \mathbf{p}:n \in \mathbb{Z}\}
\end{equation}
with $D=\log_2 \phi$, where $\phi=(1+\sqrt5)/2$ is the golden ratio,  and $\mathbf{p}=2\pi / \log2,$ the oscillatory period of $\mathcal{FS}_2$; see Figure 4. 
Moreover, a simple computation shows that 
\begin{equation}\label{residue1}
\res(\zeta_{\mathcal{FS}_2}; D+i n \mathbf{p})=\frac{3-\phi}{5\log2}
\end{equation}
and 
\begin{equation}\label{residue2}
\res(\zeta_{\mathcal{FS}_2}; -D+i(n+1/2) \mathbf{p})=\frac{2+\phi}{5\log2},
\end{equation}
independently of $n\in \mathbb Z$.

We refer the interested reader to \cite{LapLu2} for additional information concerning the nonarchimedean Fibonacci string. 
\begin{figure}[h]\label{fib figure}
\psfrag{p}{$\mathbf{p}$}
\psfrag{.5p}{$\frac{1}{2}\mathbf{p}$}
\psfrag{-1}{$-1$}
\psfrag{-D}{$-D$}
\psfrag{0}{$0$}
\psfrag{D}{$D$}
\psfrag{1}{$1$}
\raisebox{-1cm}
{\psfig{figure=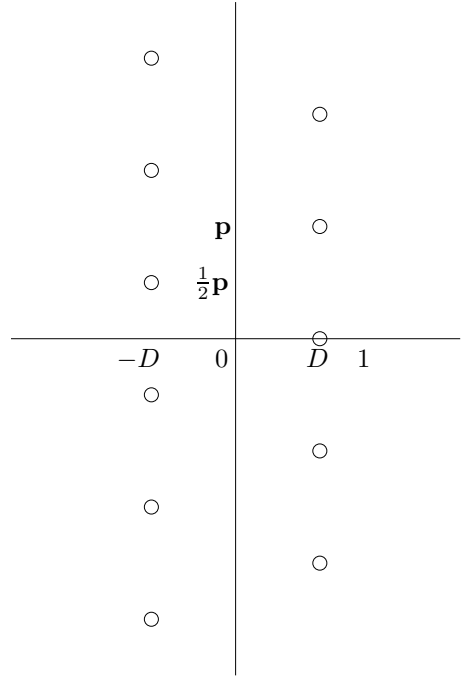, height=9 cm}}
\caption{The complex dimensions of the 2-adic Fibonacci string $\mathcal{FS}_2$. Here, $D=\log_2 \phi$ and $\mathbf{p}=2\pi / \log2.$}
\end{figure}

Note that $\zeta_{\mathcal{FS}_2}$ does not have any zero (in the variable $s$) since the equation $4^{-s}=0$ does not have any complex solution.
Moreover, in agreement with Theorem \ref{rationality}, $\zeta_{\mathcal{FS}_2}$ is a rational function of $z=2^{-s},$ i.e.,  
\begin{equation}\label{zfs2}
\zeta_{\mathcal{FS}_2}(s)=\frac{z^2}{1-z-z^2}. 
\end{equation}
Since, in light of  (\ref{zfs2}),
 the complex dimensions of $\mathcal{FS}_2$ are simple, we may apply either  Corollary \ref{ftf} or Corollary \ref{5.13}  in order to obtain the following exact fractal tube formula 
for the nonarchimedean Fibonacci string:\footnote
{In light of Theorem \ref{volume}, one can also directly derive this formula for 
$V_{\mathcal {FS}_2}(\varepsilon)$, much as was done for $V_{\mathcal{CS}_3}(\varepsilon)$ in Example~\ref{cantor volume}, although with some more strenuous work.}
  \begin{eqnarray}\label{fib vol}
V_{\mathcal {FS}_2}(\varepsilon)&=& \frac{1}{2}
 \sum_{\omega\in \mathcal D_{\mathcal{FS}_2}}\res(\zeta_{\mathcal {FS}_2};\omega)
  \frac{\varepsilon^{1-\omega}}{1-\omega}\\
   &=&
   \varepsilon^{1-D}G_1(\log_2\varepsilon^{-1})+
      \varepsilon^{1+D-i\mathbf{p}/2}G_2(\log_2\varepsilon^{-1})\nonumber, 
                          \end{eqnarray}
                where, in light Equation (\ref{cdfs2}) and of the values of $\res(\zeta_{\mathcal{FS}_2};\omega)$ provided in Equations (\ref{residue1}) and (\ref{residue2}), 
                 $G_1$ and $G_2$ are bounded periodic functions of period 1 on $\mathbb R$ given by their respective (conditionally convergent) Fourier series
                \begin{equation}
                G_1(x)= \frac{3-\phi}{10\log 2}
 \sum_{n\in \mathbb Z}
   \frac{e^{2\pi i n x}}{1-D-in\mathbf p} 
                   \end{equation}
                   and 
                    \begin{equation}
                G_2(x)= \frac{2+\phi}{10\log 2}
                 \sum_{n\in \mathbb Z}
   \frac{e^{2\pi i n x}}
  {1+D-i(n+1/2)\mathbf p}.
                     \end{equation}
Note that the above Fourier series for $G_1$ and $G_2$ are conditionally (and also distributionally) convergent, for all $x\in\mathbb R.$
 Furthermore,
the explicit fractal tube formula (\ref{fib vol}) for $\mathcal{FS}_2$ actually holds pointwise and not just distributionally, as the interested reader may verify via a direct computation.
The average Minkowski content of  $\mathcal{FS}_2$ is given by 
\[
\mathcal M_{av}=\mathcal M_{av}(\mathcal{FS}_2)=\frac{1}{2(\phi +2)(\log2- \log \phi)},
\]
where $\phi=\frac{1+ \sqrt 5}{2}$ is the golden ratio.
Indeed, since  $D=\log_2 \phi $, we deduce from Equation (\ref{residue1}) with $n=0$ that 
\[\res(\zeta_{\mathcal {FS}_2}; D)=\frac{1}{(\phi+2)\log2}.\]
Hence, the above expression for $\mathcal M_{av}$ 
follows from Theorem \ref{average content} with $p=2$. Furthermore, note that 
$\log2- \log \phi=\log(\sqrt 5-1).$
Therefore, $\mathcal M_{av}$ can be rewritten  as follows:
 \[
\mathcal M_{av}=\frac{1}{(5+\sqrt{5})\log(\sqrt 5-1)}.
\]
\end{example}

\bibliographystyle{amsplain}

\end{document}